\documentclass[reqno,12pt]{amsart}
\usepackage[margin=1.1in, footskip=1cm]{geometry}
\usepackage{amsmath, amsthm, amssymb, booktabs}
\usepackage{mathrsfs}
\usepackage[usenames,dvipsnames]{color}
\pdfpagewidth=\paperwidth
\pdfpageheight=\paperheight

\newtheorem{theorem}{Theorem}[section]
\newtheorem{lemma}[theorem]{Lemma}
\newtheorem{corollary}[theorem]{Corollary}

\theoremstyle{definition}

\theoremstyle{remark}

\numberwithin{equation}{section}

\def\le{\leqslant} \def\ge{\geqslant}

\makeatletter
\@namedef{subjclassname@2020}{\textup{2020} Mathematics Subject Classification}
\makeatother

\begin{document}
\title[Structure and paucity]{Structure and paucity in affine diagonal systems, I}
\author[J. Brandes]{Julia Brandes}
\address{Mathematical Sciences, University of Gothenburg and Chalmers Institute of Technology, 412 96 
G\"oteborg, Sweden}
\email{brjulia@chalmers.se}
\author[T. D. Wooley]{Trevor D. Wooley}
\address{Department of Mathematics, Purdue University, 150 N. University Street, West 
Lafayette, IN 47907-2067, USA}
\email{twooley@purdue.edu}
\subjclass[2020]{11D45, 11D72, 11D41}
\keywords{Paucity, diagonal equations, Diophantine equations in many variables}
\date{}

\begin{abstract} Let $\varepsilon>0$ and $\mathbf h\in \mathbb Z^3$. We show that whenever $P$ is large and 
the system
\[
x_1^j+x_2^j-y_1^j-y_2^j=h_j\quad (j=1,2,3)
\]
has more than $P^\varepsilon$ integral solutions with $1\le x_i,y_i\le P$, then there exist natural numbers $a$ and 
$b$ with $h_j=a^j-b^j$ $(j=1,2,3)$. This example illustrates the theme that, either the Diophantine system has a 
paucity of integral solutions, or else the coefficient tuple $\mathbf h$ is highly structured. We examine related 
paucity problems as well as some consequences for problems involving more variables.
\end{abstract}

\maketitle

\section{Introduction} This paper concerns the influence of coefficient structure on the paucity of integral solutions 
in certain systems of affine diagonal equations. What we have in mind is best illustrated by introducing our first 
example of interest. Let $P$ be a large natural number, and consider a coefficient triple 
$(h_1,h_2,h_3)\in \mathbb Z^3$. We denote by $S_2(P;\mathbf h)$ the number of integral solutions of the system
\begin{equation}\label{1.1}
\left.
\begin{aligned}
x_1^3+x_2^3-y_1^3-y_2^3&=h_3\\
x_1^2+x_2^2-y_1^2-y_2^2&=h_2\\
x_1+x_2-y_1-y_2&=h_1
\end{aligned}
\right\} ,
\end{equation}
with $1\le x_i,y_i\le P$ $(i=1,2)$. It is apparent that for certain triples $\mathbf h=(h_1,h_2,h_3)$, there are many 
integral solutions to the system \eqref{1.1} counted by $S_2(P;\mathbf h)$. When $\mathbf h=(0,0,0)$, for example, 
there are $2P^2-P$ diagonal solutions with $\{x_1,x_2\}=\{y_1,y_2\}$. Furthermore, when $a$ and $b$ are distinct 
natural numbers with $1\le a,b\le P$ and
\begin{equation}\label{1.2}
h_j=a^j-b^j\quad (j=1,2,3),
\end{equation}
there are $P$ solutions with $x_1=y_1$, $x_2=a$, $y_2=b$. Our goal is to prove that there is a paucity of integral 
solutions not belonging to one of these structured sets.

\begin{theorem}\label{theorem1.1}
Let $\eta\in (0,1)$ be fixed, and let $P$ be sufficiently large in terms of $\eta$. Suppose that 
$\mathbf h\in \mathbb Z^3$ is a coefficient triple with the property that $S_2(P;\mathbf h)>P^\eta$. Then, the 
tuple $\mathbf h$ satisfies one of the following two conditions:
\begin{itemize}
\item[(a)] for some integers $a$ and $b$ with $a\ne b$ and $1\le a,b\le P$, one has $h_j=a^j-b^j$ $(j=1,2,3)$, in 
which case $S_2(P;\mathbf h)=4P$;
\item[(b)] one has $h_j=0$ $(j=1,2,3)$, in which case $S_2(P;\mathbf 0)=2P^2-P$.
\end{itemize}
In particular, if $\mathbf h$ satisfies neither condition (a) nor condition (b), then 
$S_2(P;\mathbf h)=O(P^\varepsilon)$.
\end{theorem}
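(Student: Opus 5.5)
Our plan is to use the classical substitution that reduces the power sums to elementary symmetric functions, and then to factor. Put $s=x_1+x_2$, $t=y_1+y_2$, $p=x_1x_2$, $q=y_1y_2$. Then $x_1^2+x_2^2=s^2-2p$ and $x_1^3+x_2^3=s^3-3sp$, with analogous expressions in $t,q$. The system \eqref{1.1} becomes
\[
s-t=h_1,\qquad s^2-t^2-2(p-q)=h_2,\qquad s^3-t^3-3(sp-tq)=h_3 .
\]
We use the first equation to write $s=t+h_1$ and the second to express $p-q$ in terms of $t$ and $\mathbf h$. Substituting $p=q+\tfrac12(s^2-t^2-h_2)$ into the third equation, the variable $q$ appears only through $3(s-t)q=3h_1q$.

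\emph{Case $h_1\ne 0$.} The cubic equation determines $q$ as a function of $t$, and the resulting relation shows that $h_1$ divides a fixed polynomial expression in $t$. Since $(s,t)$ together with $(p,q)$ determine the unordered pairs $\{x_1,x_2\}$ and $\{y_1,y_2\}$, each $t$ yields $O(1)$ solutions. This alone gives only $O(P)$, so more is needed.

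A better route is to use the alternative factorisation $x_1^j-y_1^j=h_j-(x_2^j-y_2^j)$, or equivalently to eliminate in terms of the differences. Write the system as $\sum (x_i^j-y_i^j)=h_j$. The standard identity for two pairs gives
\[
(x_1+x_2)^3-(y_1+y_2)^3-\tfrac32\,(\ldots)=\ldots,
\]
and with $h_1\ne0$ the elimination yields a single equation in $t$ of the shape $h_1^{-1}\cdot(\text{cubic in } t)$ that is \emph{not} free in $t$. Concretely, after substitution the cubic equation reads
\[
3h_1\bigl(q-\text{(quadratic in }t)\bigr)=\text{const}(\mathbf h),
\]
so $q$ is a fixed quadratic polynomial in $t$. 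The remaining constraint is that $t^2-4q$ and $s^2-4p$ are both perfect squares, namely $(y_1-y_2)^2$ and $(x_1-x_2)^2$. This gives two quadratic polynomials in $t$ that must simultaneously be squares. Writing $u=y_1-y_2$ and $v=x_1-x_2$, we get $v^2-u^2=A t+B$ with $A,B$ depending on $\mathbf h$, together with $u^2=Ct^2+Dt+E$.

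If $A\ne0$, eliminating $t$ produces a conic or quartic curve in $(u,v)$. When that curve is of genus zero and irreducible with nonsquare discriminant, the points of height $P$ number $O(P^\varepsilon)$ by divisor bounds applied to the factorisation $(v-u)(v+u)=At+B$ together with Pell-type counting, or alternatively by Bombieri--Pila. The degenerate cases, where the quadratic is a perfect square in $t$ or $A=B=0$, must be analysed by hand. We expect precisely these cases to correspond to $h_j=a^j-b^j$. In that situation one factor forces $x_i=a$, $y_k=b$ and the rest to be diagonal, which yields the count $4P$ from the choices of positions. The overlap $a=b$ is excluded since $a\ne b$.

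\emph{Case $h_1=0$.} Now $s=t$, and then $p-q=-h_2/2$ and $h_3=-3s(p-q)=\tfrac32 s h_2$. If $h_2\ne0$, then $s$ is determined, so $s$ and $p$ vary in a one-parameter family with $s^2-4p$ and $s^2-4q$ both squares differing by the fixed nonzero number $2h_2$. The divisor bound then gives $O(P^\varepsilon)$ choices, and note that $h_1=0$ excludes (a) since $a\ne b$. If $h_2=0$ then $h_3=0$, which gives $p=q$ and $s=t$, so the solutions are diagonal and $S_2=2P^2-P$.

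The main obstacle is the degenerate analysis when $h_1\ne0$: proving that the quadratic $u^2=Ct^2+Dt+E$ together with the linear relation has $O(P^\varepsilon)$ points unless $\mathbf h$ has the form \eqref{1.2}. We would handle it by completing the square to obtain $(2Cu')^2-(\ldots)^2=\Delta$ and applying the divisor bound when $\Delta\ne0$, and by showing that $\Delta=0$ forces structure (a). Here the Pell case with $C$ a nonsquare needs the observation that for a fixed pair $(u,v)$ with $v^2-u^2=At+B$, the parameter $t$ is determined. Accordingly, we would count via factorisations of $At+B$ jointly with the second equation.
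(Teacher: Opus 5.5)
Your treatment of the case $h_1=0$ is correct and complete. The gap is in the case $h_1\ne0$. That case carries all the content of the theorem, since it is the only one in which structure (a) can occur, and you leave it as a plan whose concrete steps would not work as stated.

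First, the algebra is off. After substituting $s=t+h_1$ and $p-q=\tfrac12(2h_1t+h_1^2-h_2)$ into the cubic equation, the $t^2$ terms cancel and one gets
\[
6h_1q=3(h_2-h_1^2)t-h_1^3+3h_1h_2-2h_3 .
\]
So $q$ is linear, not quadratic, in $t$. Consequently $u^2=t^2-4q$ has leading coefficient $C=1$, and your Pell case does not exist.

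Second, the counting devices you propose do not give $O(P^\varepsilon)$. Bombieri--Pila yields only $P^{1/d+\varepsilon}$ integral points of height at most $P$ on an irreducible curve of degree $d$. Genus zero is no obstacle to having many points: a parabola has $\gg P^{1/2}$ such points. Counting through factorisations of $At+B$ as $t$ varies gives $P^{1+\varepsilon}$, because $At+B$ is not a fixed integer. The variable $v$ is not needed anyway: $(t,u)$ fixes $y_1,y_2$, and then $s$ and $p$ fix $\{x_1,x_2\}$.

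Third, and most importantly, the dichotomy itself is only stated as something you expect. You do not show that the degenerate case forces $h_j=a^j-b^j$ with $1\le a,b\le P$, nor that every solution is then of the form $\{a,z\},\{b,z\}$.

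With the correct algebra your route does close. Multiplying the displayed relation by $6h_1$ and completing the product gives
\[
\bigl(6h_1y_1-3(h_2-h_1^2)\bigr)\bigl(6h_1y_2-3(h_2-h_1^2)\bigr)=3\Psi(\mathbf h),\qquad \Psi(\mathbf h)=h_1^4+3h_2^2-4h_1h_3 .
\]
If $\Psi(\mathbf h)\ne0$, the divisor bound gives $O(P^\varepsilon)$ pairs $(y_1,y_2)$, each fixing the solution up to the order of $x_1,x_2$.

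If $\Psi(\mathbf h)=0$, some $y_k$ equals $b=(h_2-h_1^2)/(2h_1)$. The same argument with $\mathbf x$ and $\mathbf y$ interchanged and $\mathbf h$ replaced by $-\mathbf h$ (note $\Psi(-\mathbf h)=\Psi(\mathbf h)$) gives some $x_i=a=(h_2+h_1^2)/(2h_1)$. Then $h_1=a-b$ and $h_2=a^2-b^2$. The relation $\Psi(\mathbf h)=0$ together with \eqref{2.2} yields $h_3=a^3-b^3$. Finally, the linear equation forces the remaining $x$ and $y$ variables to be equal.

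The paper instead applies the divisor bound to the four-factor identity $12\prod_{i,k}(x_i-y_k)=\Psi(\mathbf h)$ of Lemma~\ref{lemma2.1}. Your repaired argument reaches the same invariant $\Psi(\mathbf h)$ through a two-factor hyperbola in $(y_1,y_2)$, at the cost of a separate treatment of $h_1=0$. The paper's identity handles all $\mathbf h$ at once and makes the degenerate case immediate, since $\Psi(\mathbf h)=0$ directly gives some $x_i=y_k$.

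A small point on the count: counting position choices carefully gives $4(P-2)+2+2=4P-4$ solutions in case (a), not $4P$. The cases $z=a$ and $z=b$ each give only two ordered solutions. The value $4P$ in the statement has the same slip.
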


We note that in scenario (a), the solutions of \eqref{1.1} take the form $\mathbf x,\mathbf y$ with
\[
\{x_1,x_2\}=\{a,z\}\quad \text{and}\quad \{y_1,y_2\}=\{b,z\},
\]
wherein $z$ is any integer with $1\le z\le P$. Solutions in scenario (b), meanwhile, are the diagonal ones with 
$\{x_1,x_2\}=\{y_1,y_2\}$.\par

The wider theme that we explore in this paper is illustrated well by Theorem \ref{theorem1.1}. Either the 
system \eqref{1.1} has very few integral solutions, or else the coefficient tuple $\mathbf h$ is highly structured, with 
an explicit structure aligned with the equations comprising \eqref{1.1}. These ideas extend to systems in a larger 
number of variables. When $t\in \mathbb N$, denote by $S_t(P;\mathbf h)$ the number of integral solutions of the 
system
\[
\sum_{i=1}^t(x_i^j-y_i^j)=h_j\quad (j=1,2,3),
\]
with $1\le x_i,y_i\le P$ $(1\le i\le t)$. Improving on earlier work of Brandes and Hughes \cite{BH2022}, it was shown 
in Wooley \cite[Theorem 1.1]{Woo2023b} that $S_t(P;\mathbf h)\ll P^{t-1/2+\varepsilon}$ whenever $h_1\ne 0$ and 
$1\le t\le 5$, and also when $h_2\ne 0$ and $1\le t\le 4$. We note also that an asymptotic fomula for 
$S_6(P;\mathbf h)$ is established in \cite[Theorems 1.1 and 1.2]{Woo2023a}, provided that either $h_1\ne 0$ or 
$h_2\ne 0$. In addition, when $h_1=0$ and $h_2\ne 0$, it follows from \cite[Theorem 1.2]{Woo2023b} that one has 
$S_3(P;\mathbf h)\ll P^{2+\varepsilon}$. This, in common with all of the previous results cited, offers estimates 
beyond square-root cancellation, with the upper bound $S_3(P;\mathbf h)\ll P^{2+\varepsilon}$ saving a factor 
$P^{1-\varepsilon}$ over this square-root barrier when $h_1=0$ and $h_2\ne 0$. An immediate consequence of 
Theorem \ref{theorem1.1} is that this last upper bound now extends to all $\mathbf h\ne \mathbf 0$.

\begin{corollary}\label{corollary1.2}
Suppose that $\mathbf h\in \mathbb Z^3\setminus \{\mathbf 0\}$ and $\varepsilon>0$. Then one has 
$S_3(P;\mathbf h)\ll P^{2+\varepsilon}$.
\end{corollary}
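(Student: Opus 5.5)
We derive Corollary \ref{corollary1.2} from Theorem \ref{theorem1.1} by fixing the third pair of variables $(x_3,y_3)$. For fixed $1\le x_3,y_3\le P$, the remaining variables $x_1,x_2,y_1,y_2$ satisfy the system \eqref{1.1} with shifted coefficients
\[
k_j=h_j-x_3^j+y_3^j\quad (j=1,2,3).
\]
Hence
\[
S_3(P;\mathbf h)=\sum_{1\le x_3,y_3\le P}S_2(P;\mathbf k(x_3,y_3)).
\]
We apply Theorem \ref{theorem1.1} with $\eta=\varepsilon$ to each summand. For each pair $(x_3,y_3)$ exactly one of three possibilities holds.

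\emph{Generic pairs.} Here $S_2(P;\mathbf k)\le P^\varepsilon$. These contribute at most $P^{2+\varepsilon}$ in total.

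\emph{Pairs in case (b).} Here $\mathbf k=\mathbf 0$, that is, $h_j=x_3^j-y_3^j$ for $j=1,2,3$. Since $\mathbf h\ne\mathbf 0$, we have $x_3\ne y_3$. The three power sums determine the multiset $\{x_3,-y_3\}$, hence the pair $(x_3,y_3)$ itself. Concretely, $h_1=x_3-y_3$ and $h_2=(x_3-y_3)(x_3+y_3)$ fix $x_3-y_3\ne0$ and $x_3+y_3$. So at most one pair falls in this case, and it contributes $2P^2-P\ll P^2$.

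\emph{Pairs in case (a).} Here $S_2=4P$, and there are $a\ne b$ with
\[
h_j=x_3^j-y_3^j+a^j-b^j\quad (j=1,2,3).
\]
The task is to show that only $O(P^{1+\varepsilon})$ pairs $(x_3,y_3)$ admit such $a,b$. Then this case contributes $O(P\cdot P^{1+\varepsilon})=O(P^{2+\varepsilon})$, which gives the corollary.

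The main obstacle is this counting. For given $(x_3,y_3)$ the pair $(a,b)$ is determined up to $O(1)$ choices, so it suffices to count quadruples $(x_3,a,y_3,b)$ with
\[
x_3^j+a^j-y_3^j-b^j=h_j\quad (j=1,2,3).
\]
This is precisely a solution counted by $S_2(P;\mathbf h)$ itself. Apply Theorem \ref{theorem1.1} to $\mathbf h$ directly:
\begin{itemize}
\item if $\mathbf h$ is generic, $S_2(P;\mathbf h)\le P^\varepsilon$;
\item if $\mathbf h$ is of type (a), $S_2(P;\mathbf h)=4P$;
\item type (b) is excluded because $\mathbf h\ne\mathbf 0$.
\end{itemize}
So the number of such $(x_3,y_3)$ is at most $S_2(P;\mathbf h)\le 4P+P^\varepsilon$.

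The only care needed is that $P$ be large in terms of $\varepsilon$; small $P$ is absorbed into the implied constant. Combining the three cases gives $S_3(P;\mathbf h)\ll P^{2+\varepsilon}$.
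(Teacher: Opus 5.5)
Your proposal is correct and is essentially the paper's argument: you fix $(x_3,y_3)$ and apply Theorem~\ref{theorem1.1} to the shifted tuple. You show the case $\mathbf k=\mathbf 0$ arises for at most one pair (via $x_3-y_3=h_1\ne 0$ and $x_3+y_3=h_2/h_1$), and you bound the number of case~(a) pairs by $S_2(P;\mathbf h)\le 4P$ through a second application of Theorem~\ref{theorem1.1}, which is exactly the paper's $\Upsilon_2=O(1)$ and $\Upsilon_1=O(P)$.

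Two minor remarks. The $O(1)$-determinacy of $(a,b)$ is not needed, since the number of pairs $(x_3,y_3)$ is trivially at most the number of quadruples. Also, your phrase about power sums of $\{x_3,-y_3\}$ is loose, because $x_3^2-y_3^2$ is not such a power sum, though the explicit computation that follows it is fine.
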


We remark that when $\mathbf h$ has the shape determined by \eqref{1.2}, with $1\le a,b\le P$ and 
$a\ne b$, then $S_3(P;\mathbf h)\gg P^2$. This conclusion follows via an argument that the reader will readily 
discern from the proof of the corollary. Thus, the conclusion of the corollary may be regarded as essentially sharp.

\par It will be evident from our discussion of the proof of Theorem \ref{theorem1.1} in \S2 that the system 
\eqref{1.1} has particularly convenient features associated with its status as an affine Vinogradov system. In other 
systems, additional difficulties may be encountered. We illustrate these challenges with a consideration of a system 
closely related to \eqref{1.1}. Consider then a coefficient triple $(h_1,h_2,h_4)\in \mathbb Z^3$. We denote by 
$T_2(P;\mathbf h)$ the number of integral solutions of the system
\begin{equation}\label{1.3}
\left.
\begin{aligned}
x_1^4+x_2^4-y_1^4-y_2^4&=h_4\\
x_1^2+x_2^2-y_1^2-y_2^2&=h_2\\
x_1+x_2-y_1-y_2&=h_1
\end{aligned}
\right\} ,
\end{equation}
with $1\le x_i,y_i\le P$ $(i=1,2)$. In \S3 we establish the following relative of Theorem \ref{theorem1.1}.

\begin{theorem}\label{theorem1.3} Let $\eta\in (0,1)$ be fixed, and let $P$ be sufficiently large in terms of $\eta$. 
Suppose that $\mathbf h\in \mathbb Z^3$ is a coefficient triple with the property that $T_2(P;\mathbf h)>P^\eta$. 
Then, the tuple $\mathbf h$ satisfies one of the following two conditions:
\begin{itemize}
\item[(a)] for some integers $a$ and $b$ with $a\ne b$ and $1\le a,b\le P$, one has $h_j=a^j-b^j$ $(j=1,2,4)$, in 
which case $T_2(P;\mathbf h)=4P$;
\item[(b)] one has $h_j=0$ $(j=1,2,4)$, in which case $T_2(P;\mathbf 0)=2P^2-P$.
\end{itemize}
In particular, if $\mathbf h$ satifies neither condition (a) nor condition (b), then 
$T_2(P;\mathbf h)=O(P^\varepsilon)$.
\end{theorem}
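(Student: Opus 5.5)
We would attack the statement by the standard change of variables that turns the linear and quadratic equations into a parametrisation, so that the quartic equation becomes a single polynomial condition in few variables. Put $s=x_1+x_2$, $u=x_1-x_2$, $t=y_1+y_2$ and $v=y_1-y_2$, so that $u\equiv s$ and $v\equiv t\pmod 2$. The identities
\[
2(x_1^2+x_2^2)=s^2+u^2,\qquad 8(x_1^4+x_2^4)=s^4+6s^2u^2+u^4
\]
transform \eqref{1.3} into three conditions. The first is $s=t+h_1$. The second is $u^2-v^2=D$, where $D=D(t)=2h_2-h_1(2t+h_1)$. The third is
\[
s^4-t^4+6(s^2u^2-t^2v^2)+u^4-v^4=8h_4 .
\]
In the quartic we will substitute $s=t+h_1$ and $u^2=v^2+D$, and use $u^4-v^4=D(u^2+v^2)$. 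The $v^4$ terms cancel. Since $s^2-t^2=2h_2-D$, the coefficient of $v^2$ collapses to $4G(t)$, where $G(t)=h_1(2t+h_1)+h_2$. We are thus left with
\[
4G(t)\,v^2=F(t):=8h_4-(s^4-t^4)-6s^2D-D^2 ,
\]
and $F$ is a polynomial in $t$ of degree at most $3$ when $h_1\ne0$. The counting plan is then as follows. Each admissible $t$ determines $v^2$, hence at most two values of $v$, then $u$ up to sign, and hence $(\mathbf x,\mathbf y)$ in $O(1)$ ways. So it suffices to bound the number of $t\le 2P$ with $F(t)/4G(t)$ a perfect square. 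Values of $t$ with $G(t)=0$ are handled separately.

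Next we split into cases according to $\mathbf h$.

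Case $h_1=h_2=0$. Here $G\equiv0$ and $D\equiv 0$, so $F\equiv 8h_4$. If $h_4\ne0$ there are no solutions. If $h_4=0$ then $u=\pm v$ and $s=t$, which are exactly the diagonal solutions. Counting these gives $2P^2-P$, which is (b).

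Case $h_1=0\ne h_2$. Here $s=t$ and $D=2h_2$, so $F(t)=8h_4-12h_2t^2-4h_2^2$ is quadratic in $t$, and we need $h_2v^2=2h_4-3h_2t^2-h_2^2$. This is a binary quadratic equation $3h_2t^2+h_2v^2=N$ with $N=2h_4-h_2^2$. The form is definite, so when $N\ne0$ it has $O(P^\varepsilon)$ solutions by the divisor bound in $\mathbb Q(\sqrt{-3})$, and when $N=0$ it has no nonzero solutions.

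Case $h_1\ne0$. We perform the division $F(t)=G(t)Q(t)+R$ in $\mathbb Q[t]$, with $Q$ quadratic and $R$ a constant, clearing the bounded denominators, which are powers of $2h_1$. Since $4G(t)\mid F(t)$ is required, when $R\ne0$ the integer $G(t)$ divides a fixed nonzero integer of size $P^{O(1)}$. This leaves $O(P^\varepsilon)$ values of $t$. When $R=0$, we need $Q(t)/4=v^2$ with $|t|,|v|\ll P$. If $Q$ is not a constant multiple of a square, this is a nondegenerate conic. For a definite or nonsquare-discriminant form, it has $O(P^\varepsilon)$ integral points of height $\le P$: a divisor bound for the finitely many orbits, times $O(\log P)$ units. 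If the leading coefficient is a square, the equation factors as a product of two linear forms equal to a constant, and again we get $O(P^\varepsilon)$ points. Finally, the single value $t_0$ with $G(t_0)=0$ (if integral) requires $F(t_0)=0$. Then $v$ is free, subject to $u^2-v^2=D(t_0)$, which allows $O(P^\varepsilon)$ pairs unless $D(t_0)=0$.

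The main obstacle is the degenerate residue: the subcase $R=0$ with $Q$ a constant times the square of a linear polynomial, together with the subcase $G(t_0)=F(t_0)=D(t_0)=0$. We must show these occur only when $h_j=a^j-b^j$, with the solution set exactly $\{x_1,x_2\}=\{a,z\}$, $\{y_1,y_2\}=\{b,z\}$. The intended route is to argue in reverse. A family of $\gg P^\eta$ solutions forces a polynomial identity in a parameter. Comparing that identity with the explicit family coming from (a), via the coefficients of $F$, $G$ and $Q$ in $h_1,h_2,h_4$, should pin down $h_2=(h_1^2+ \ldots)$ and $h_4$ as functions of $(a,b)$ with $a-b=h_1$ and $a+b=h_2/h_1$. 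The check that $a,b$ are integers in $[1,P]$ then uses a single solution of the family. For that verification, the elimination would be done symbolically and the identities $h_2=h_1(a+b)$ and $8h_4=\ldots$ matched. The exact count $4P$ then follows by inclusion–exclusion over the orderings, noting that $z=a$ or $z=b$ causes no double counting when $a\ne b$.
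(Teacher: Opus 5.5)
\textbf{There is a genuine gap: the case that produces structure (a) is never actually treated.}

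Your elimination is correct, and it reproduces the paper's identity in other coordinates. One has $G(t)=2\tau(\mathbf x,\mathbf y)$, and as a polynomial identity
\[
F(t)=G(t)\,\frac{(D(t)-h_1^2)^2}{h_1^2}-\frac{4\Phi(\mathbf h)}{h_1^2},\qquad \Phi(\mathbf h)=h_2^3+h_1^4h_2-2h_1^2h_4 .
\]
So $R=-4\Phi(\mathbf h)/h_1^2$, and your split $R\ne 0$ versus $R=0$ is exactly the paper's split $\Phi(\mathbf h)\ne 0$ versus $\Phi(\mathbf h)=0$.

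The case $R=0$ is the only place where (a) can arise, and you leave it unproved. Your ``argue in reverse'' plan runs the wrong way: matching coefficients against the explicit family from (a) shows that (a) yields a degenerate configuration, not that a degenerate configuration forces (a). The identity above also shows that $Q(t)=4(t-c)^2$ identically, with $c=h_2/h_1-h_1$. So your nondegenerate-conic discussion is vacuous, and all of $R=0$ is your ``residue''.

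Once $Q$ is computed, the missing step is short. From $v=\pm(t-c)$ one gets
\[
u^2=v^2+D(t)=(t-c-h_1)^2+2(h_2-ch_1-h_1^2)=(s-c-2h_1)^2 .
\]
Put $b=c/2$ and $a=b+h_1$. Then $y_1=b$ or $y_2=b$, and $x_1=a$ or $x_2=a$. Since $s-t=a-b$, the remaining $x$ and $y$ coordinates coincide. Hence $a,b$ are integers in $[1,P]$ (they are coordinates of a solution), $a\ne b$, and $h_4=a^4-b^4$ follows by substitution.

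\textbf{Your second residue cannot be handled by comparison with (a).} This is the subcase $G(t_0)=F(t_0)=D(t_0)=0$. Since $D(t_0)=3h_2$, it is precisely the case $h_1\ne 0=h_2=h_4$. Over signed integers it carries $\gg P$ solutions not of the shape $\{a,z\},\{b,z\}$, namely $x_1=-y_2$, $x_2=-y_1$ with $x_1+x_2=h_1/2$. So it must be shown to be empty, and that needs positivity, which your plan never invokes. Indeed, $G(t_0)=0$ with $h_2=0$ gives $t_0=-h_1/2$ and $s_0=t_0+h_1=-t_0$, which is impossible when $s,t\ge 2$. The paper also uses positivity at the corresponding point, namely the case $h_2=0$ of the branch $\tau(\mathbf x,\mathbf y)=0$.

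\textbf{Your justification of the final count is wrong.} For $z=a$ the pair $(x_1,x_2)$ has only one ordering, and for $z=b$ the pair $(y_1,y_2)$ has only one. The family therefore has $4(P-2)+2+2=4P-4$ members; for instance, $P=3$ and $(a,b)=(2,1)$ give exactly $8$ solutions. The value $4P$ in the statement is itself slightly off.
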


Similar comments apply concerning the structure of solutions in scenarios (a) and (b) as discussed following the 
statement of Theorem \ref{theorem1.1}. Moreover, just as in the analogous discussion relating to 
$S_t(P;\mathbf h)$, one can examine the situation with additional variables. Denote by $T_t(P;\mathbf h)$ the 
number of integral solutions of the system
\begin{equation}\label{1.4}
\sum_{i=1}^t(x_i^j-y_i^j)=h_j\quad (j=1,2,4),
\end{equation}
with $1\le x_i,y_i\le P$ $(1\le i\le t)$. We present an analogue of Corollary \ref{corollary1.2} relating to the system 
\eqref{1.4}.

\begin{corollary}\label{corollary1.4}
Suppose that $\mathbf h\in \mathbb Z^3\setminus \{ \mathbf 0\}$ and $\varepsilon >0$. Then one has 
$T_3(P;\mathbf h)\ll P^{2+\varepsilon}$.
\end{corollary}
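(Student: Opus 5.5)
We shall deduce the corollary from Theorem \ref{theorem1.3} by fixing the third pair of variables. Write $T_3(P;\mathbf h)=\sum_{1\le u,v\le P}T_2(P;\mathbf h-\mathbf k(u,v))$, where $\mathbf k(u,v)=(u-v,u^2-v^2,u^4-v^4)$; this simply records the value of $(x_3,y_3)=(u,v)$. For each pair $(u,v)$, Theorem \ref{theorem1.3} with $\eta=\varepsilon$ tells us that $T_2(P;\mathbf h-\mathbf k(u,v))$ is either $O(P^\varepsilon)$, or equals $4P$ when $\mathbf h-\mathbf k(u,v)$ has the shape $(a-b,a^2-b^2,a^4-b^4)$ with $a\ne b$, or equals $2P^2-P$ when $\mathbf h=\mathbf k(u,v)$. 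The generic pairs contribute $O(P^{2+\varepsilon})$ in total. It therefore remains to count the exceptional pairs $(u,v)$ in each structured case.

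\emph{Case (b).} Here $\mathbf k(u,v)=\mathbf h$. Since $\mathbf h\ne\mathbf 0$, we have $u\ne v$. The equations $u-v=h_1$ and $u^2-v^2=h_2$ then force $h_1\ne 0$ and $u+v=h_2/h_1$, so $(u,v)$ is uniquely determined. If instead $h_1=0$, then $u=v$ and $\mathbf h=\mathbf 0$, contrary to hypothesis. Hence at most one pair falls in this case, and it contributes $O(P^2)$.

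\emph{Case (a).} Here we need the number of quadruples $(a,b,u,v)\in[1,P]^4$ with $a\ne b$ satisfying
\[
a^j-b^j+u^j-v^j=h_j\quad (j=1,2,4),
\]
and each such quadruple contributes $4P$. It therefore suffices to show that this count is $O(P^{1+\varepsilon})$. This is the main step. I would fix $a$ and $v$ (there are $P^2$ choices), leaving a system in $(b,u)$ of the shape $u-b=m_1$, $u^2-b^2=m_2$, $u^4-b^4=m_4$. But fixing two variables loses too much, so I would instead argue as follows. If $h_1\ne 0$ or $h_2\ne 0$, I would fix $a$ only and count the pairs $(u,v)$ and $b$. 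Better still, the count is at most the number of solutions of the system $x_1^j+x_2^j-y_1^j-y_2^j=h_j$, which is exactly $T_2(P;\mathbf h)$ with $(x_1,x_2,y_1,y_2)=(a,u,b,v)$. By Theorem \ref{theorem1.3} applied to $\mathbf h$ itself, this count is either $O(P^\varepsilon)$ or equal to $4P$ (the case $\mathbf h=\mathbf 0$ being excluded). In either event the exceptional pairs number $O(P)$, and they contribute $O(P\cdot 4P)=O(P^2)$.

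Adding the three contributions gives $T_3(P;\mathbf h)\ll P^{2+\varepsilon}$. The only subtle point is the recognition in case (a) that the exceptional set is itself counted by $T_2(P;\mathbf h)$. The argument for Corollary \ref{corollary1.2} is identical, with Theorem \ref{theorem1.1} in place of Theorem \ref{theorem1.3}.
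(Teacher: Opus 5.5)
Your proposal is correct and follows the paper's route. Like the paper, you slice by $(x_3,y_3)$ and apply Theorem \ref{theorem1.3} to each slice. You then show that the diagonal case (b) arises for at most one pair, since $h_1\ne 0$. Finally, you bound the number of case (a) quadruples $(a,b,x_3,y_3)$ by $T_2(P;\mathbf h)=O(P)$ through a second application of Theorem \ref{theorem1.3}, so the total contribution from case (a) is $O(P^2)$. The false starts at the beginning of your case (a) paragraph can simply be deleted.
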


Just as in our remark following the statement of Corollary \ref{corollary1.2}, this conclusion is essentially sharp in 
the case that $h_j=a^j-b^j$ $(j=1,2,4)$, with $1\le a,b\le P$ and $a\ne b$.\par

We complete this initial foray into the interplay between structure and paucity with another illustrative example. 
When $s$ and $k$ are natural numbers, and $\mathbf h\in \mathbb Z^k$, denote by $U_{s,k}(P;\mathbf h)$ the 
number of solutions of the Diophantine system
\begin{equation}\label{1.5}
\sum_{i=1}^sx_i^{2j-1}=h_j\quad (1\le j\le k),
\end{equation}
with $|x_i|\le P$ $(1\le i\le s)$. This system is an affine version of that considered in work of Br\"udern and Robert 
\cite{BR2012}. We first offer a theorem analogous to Theorems \ref{theorem1.1} and \ref{theorem1.3} showing that 
limitations to the strength of paucity results imply structure in the tuple $\mathbf h$.

\begin{theorem}\label{theorem1.5}
Let $\eta\in (0,1)$ be fixed, and let $k$ and $r$ be non-negative integers with $k\ge 2$. Let $P$ be sufficiently large 
in terms of $\eta$, $k$ and $r$. Suppose that $\mathbf h\in \mathbb Z^k$ is a coefficient $k$-tuple having the 
property that
\begin{equation}\label{1.6}
U_{k+1,k}(P;\mathbf h)> P^{r+\eta}.
\end{equation}
Then one has $0\le r\le (k-1)/2$, and for some integers $a_i$ with $|a_i|\le P$ $(1\le i\le k-1-2r)$, one has
\begin{equation}\label{1.7}
h_j=a_1^{2j-1}+\ldots +a_{k-1-2r}^{2j-1}\quad (1\le j\le k).
\end{equation}
In particular, when $r$ is the largest integer for which the lower bound \eqref{1.6} holds for some $\eta\in (0,1)$, 
then for every $\varepsilon>0$, we have
\[
P^{r+1}\ll U_{k+1,k}(P;\mathbf h)\ll P^{r+1+\varepsilon}.
\]
\end{theorem}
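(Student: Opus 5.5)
Our plan rests on the classical fact, going back to Newton's identities, that the odd power sums $p_1,p_3,\ldots,p_{2k-1}$ of a multiset of $s$ integers determine the multiset up to cancellation of pairs $\{x,-x\}$. Precisely: if $\mathbf x\in\mathbb Z^{m}$ and $\mathbf y\in\mathbb Z^{n}$ with $m+n\le 2k+1$ have equal odd power sums $\sum x_i^{2j-1}=\sum y_i^{2j-1}$ $(1\le j\le k)$, then the multiset $\{x_1,\ldots,x_m,-y_1,\ldots,-y_n\}$ splits into pairs $\{z,-z\}$ together with zeros. We prove this by contradiction. Suppose not, and let $w_1,\ldots,w_t$ be the distinct absolute values $w>0$ in the combined multiset $\{x_i\}\cup\{-y_i\}$ for which the multiplicities of $w$ and $-w$ differ. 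Then $\sum_l c_l w_l^{2j-1}=0$ $(1\le j\le k)$ with $c_l\ne 0$, and $t\le (m+n)/2\le k$ is impossible unless pairing fails in a bounded way. Whenever $t\le k$, the Vandermonde system in $w_l^2$, with nonzero weights $c_lw_l$, forces $c_l=0$, a contradiction. The count $t\le\lfloor (m+n)/2\rfloor$ needs care, since a single unpaired $w$ could use one element. Better: $t\le m+n$ in general, and we need $t\le k$; so we apply the lemma only in the form actually needed below.

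We first count solutions of \eqref{1.5} with $s=k+1$. Fix a solution $\mathbf x$ and compare it with a reference solution $\mathbf x^{(0)}$, noting that $\mathbf x$ and $\mathbf x^{(0)}$ have equal odd power sums with $2k+2$ entries in total. This is one more than the Vandermonde argument directly handles, so instead we use the following. Among solutions, consider the signed multiset $M(\mathbf x)$ obtained by deleting all pairs $\{z,-z\}$ (and zeros) from $\mathbf x$; it has $k+1-2\rho-\zeta$ elements, where $\rho$ is the number of pairs and $\zeta$ the number of zeros. Its odd power sums equal $\mathbf h$.

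Key step: for any two solutions, $M(\mathbf x)$ and $M(\mathbf x')$ of sizes $m,m'$ have no cancelling pair across, after mutual cancellation. The combined signed system has at most $m+m'$ distinct absolute values, and we need this to be at most $k$. If $m+m'\le k$, then the Vandermonde argument gives $M(\mathbf x)=M(\mathbf x')$. Hence, when reduced cores are small, $\mathbf h$ determines the core uniquely. Moreover, the number of solutions with $\rho$ free pairs, plus zeros, is $\asymp P^{\rho}$, since each pair contributes $O(P)$ choices and the core is fixed up to permutation.

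Now suppose $U_{k+1,k}(P;\mathbf h)>P^{r+\eta}$. Split the solutions according to the core size $m$. Cores with $m\ge (k+1)/2$ roughly, that is with $\rho$ small, contribute few solutions. Here we need paucity: solutions of \eqref{1.5} with fixed pairs/zeros removed reduce to a system of $k$ equations in $m\le k+1$ variables with no vanishing subsums. By Bézout/Newton, once $\mathbf h$ is fixed, the elementary symmetric functions are determined up to one free parameter ($k$ equations, $k+1$ unknowns), so the count is $O(P^{\varepsilon})$ via a divisor-type bound on the last equation. This is the main obstacle. We would handle it by eliminating with Newton identities to express $e_{k+1}$ as a polynomial in a single free parameter $e_1$ or similar and using that the polynomial $\prod(X-x_i)$ has integer roots bounded by $P$. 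Hence $P^{r+\eta}$ solutions force some $\rho\ge r+1$, and then the core has size $m=k+1-2\rho-\zeta\le k-1-2r$, giving $0\le r\le (k-1)/2$. Padding the core with zeros to length $k-1-2r$ gives \eqref{1.7}. Finally, if $r$ is maximal, then $U\gg P^{r+1}$ follows by counting free pairs, and $U\ll P^{r+1+\varepsilon}$ follows from the decomposition, since $\rho\le r+1$ by maximality.
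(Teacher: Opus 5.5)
\textbf{There is a genuine gap.} Your architecture is the same as the paper's. You classify solutions by the number $\rho$ of cancelling pairs and show each class has $\ll P^{\rho+\varepsilon}$ members. Then $U_{k+1,k}(P;\mathbf h)>P^{r+\eta}$ forces a solution with at least $r+1$ cancelling pairs, whose leftover entries, padded with zeros, give \eqref{1.7}. Your closing deductions are fine. However, the step you flag as ``the main obstacle'' is the whole content of the theorem, and your sketch of it does not work.

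What you need is that solutions of $\sum_{i=1}^{\kappa+1}x_i^{2j-1}=h_j$ $(1\le j\le\kappa)$ with $|x_i|\le P$ and no $x_i+x_j=0$ number only $O(P^\varepsilon)$. Saying that the elementary symmetric functions are determined up to one free parameter only places the solutions on a curve. (That parameter cannot be $e_1$, since $e_1=h_1$ is fixed.) A curve can carry $\gg P$ integral points of height $P$. Indeed, this very variety contains the lines $(a_1,\ldots,a_{\kappa-1},t,-t)$ whenever $h_j=a_1^{2j-1}+\cdots+a_{\kappa-1}^{2j-1}$ $(1\le j\le\kappa)$. So any argument must use the pair-free hypothesis essentially, and ``a divisor-type bound on the last equation'' presupposes a factorisation you never exhibit. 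For the same reason, your claim that the core is ``fixed up to permutation'' is justified by your Vandermonde argument only when $2m\le k$. In general it is again this missing paucity bound.

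\textbf{The missing idea} is the Br\"udern--Robert (Perron) identity used in the paper.
\begin{itemize}
\item There is a nonzero $\Upsilon\in\mathbb Z[z_1,\ldots,z_\kappa]$ of weighted degree $\kappa(\kappa+1)/2$ that vanishes at the odd power sums of $\kappa-1$ variables.
\item Evaluated at the odd power sums of $\kappa+1$ variables, it vanishes whenever some $x_i+x_j=0$. By comparing degrees it therefore equals $C\prod_{i<j}(x_i+x_j)$ with $C\ne 0$.
\item On solutions, the product of all pairwise sums is the constant $\Upsilon(\mathbf h)/C$. This is nonzero precisely because the solution is pair-free, and it is $O(P^{\kappa(\kappa+1)/2})$.
\item The divisor bound then fixes every $d_{ij}=x_i+x_j$ in $O(P^\varepsilon)$ ways. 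The relation $(\kappa-1)x_i=\sum_{j\ne i}d_{ij}-h_1$ recovers each $x_i$ when $\kappa\ge 2$.
\end{itemize}
For $\kappa=2$ this is just $(x_1+x_2)(x_1+x_3)(x_2+x_3)=(h_1^3-h_2)/3$. Shorter cores are brought to this square-plus-one shape by discarding the highest-degree equations, and cores of one or two variables are handled directly.

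\textbf{Your preliminary lemma is false as stated.} For $k=2$ one has $9+5+1=7+8$ and $9^3+5^3+1^3=7^3+8^3$, with $m+n=2k+1$ and no cancellation. Only the version with total size at most $k$ is true, and that version is not what the proof needs.
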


We present a conclusion essentially equivalent to Theorem \ref{theorem1.5} having a converse flavour, 
demonstrating that structure in the coefficient tuple $\mathbf h$ implies precise conclusions concerning paucity 
estimates.

\begin{theorem}\label{theorem1.6}
Let $P$ be large enough in terms of $k$, and suppose that $\mathbf h \in \mathbb Z^k \setminus \{\mathbf 0\}$ is 
a coefficient $k$-tuple. Take $r_0(\mathbf h)$ to be the smallest natural number $r$ with the property that there 
exist integers $a_i$ with $|a_i|\le P$ $(1\le i\le r)$ satisfying the equations 
\begin{equation}\label{1.8}
h_j=a_1^{2j-1}+\ldots +a_r^{2j-1}\quad (1\le j\le k),
\end{equation}
and write
\[
\tau(k;\mathbf h)=\left\lfloor \frac{k+1-r_0(\mathbf h)}{2}\right\rfloor .
\]
Then for every $\varepsilon>0$, we have
\[
P^{\tau(k;\mathbf h)}\le U_{k+1,k}(P;\mathbf h)\ll P^{\tau(k;\mathbf h)+\varepsilon}.
\]
\end{theorem}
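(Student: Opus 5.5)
We deduce Theorem \ref{theorem1.6} from Theorem \ref{theorem1.5}, supplemented by an explicit construction for the lower bound. Write $r_0=r_0(\mathbf h)$ and $\tau=\tau(k;\mathbf h)$. Since $\mathbf h\ne \mathbf 0$ we have $r_0\ge 1$. One should first check that $r_0\le k+1$ whenever $U_{k+1,k}(P;\mathbf h)\ge 1$: any solution $\mathbf x$ of \eqref{1.5} with $s=k+1$ is itself a representation \eqref{1.8} with $r=k+1$. If instead $U_{k+1,k}(P;\mathbf h)=0$, then $r_0>k+1$, so the floor is negative or zero, and we must interpret the claim accordingly. I expect a minor convention is needed here: $r_0$ may not exist, in which case $U=0$. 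I would treat this degenerate case separately and focus on $r_0\le k+1$.

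\emph{Lower bound.} Fix $a_1,\dots,a_{r_0}$ with $|a_i|\le P$ satisfying \eqref{1.8}. Since $k+1-r_0\ge 2\tau$, we may append $\tau$ pairs $(z_l,-z_l)$ with $|z_l|\le P$, each contributing $0$ to every odd power sum. If $k+1-r_0-2\tau=1$, we pad with one further variable equal to $0$. This produces a solution of \eqref{1.5} with $s=k+1$ for each choice of $(z_1,\dots,z_\tau)$. Distinct choices of $(z_1,\dots,z_\tau)\in[1,P]^\tau$ give distinct $(k+1)$-tuples, because the positions are fixed. Hence $U_{k+1,k}(P;\mathbf h)\ge P^{\tau}$, and in fact $\ge (2P+1)^\tau$.

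\emph{Upper bound.} Suppose, for contradiction, that $U_{k+1,k}(P;\mathbf h)>P^{\tau+\varepsilon}$ for some small $\varepsilon\in(0,1)$. Apply Theorem \ref{theorem1.5} with $r=\tau$ and $\eta=\varepsilon$. This gives $\tau\le (k-1)/2$ together with a representation \eqref{1.7} of $\mathbf h$ using $k-1-2\tau$ terms, all with $|a_i|\le P$. Since $\mathbf h\ne\mathbf 0$, this number of terms is at least $1$, so it is a natural number $r$ admissible in the definition of $r_0$. Hence $r_0\le k-1-2\tau$, that is, $\tau\le (k-1-r_0)/2$. On the other hand $\tau\ge (k-r_0)/2$ by the definition of the floor, which is a contradiction. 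Therefore $U_{k+1,k}(P;\mathbf h)\le P^{\tau+\varepsilon}$ for all sufficiently large $P$, which gives the claimed bound $\ll P^{\tau+\varepsilon}$.

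The main point requiring care is the uniformity of ``$P$ sufficiently large'' in Theorem \ref{theorem1.5}. The threshold depends on $\eta=\varepsilon$, on $k$, and on $r=\tau$. Since $\tau\le (k+1)/2$ takes only finitely many values, the dependence on $r$ is harmless, and the implicit constant may depend on $\varepsilon$ and $k$. The remaining detail is the case analysis on the parity of $k+1-r_0$ in the lower-bound construction, together with the degenerate case $r_0>k+1$.
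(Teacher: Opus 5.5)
Your argument is correct and is essentially the paper's. The paper proves $I_k(\mathbf h)=\tau(k;\mathbf h)$ and then invokes \eqref{4.5} and \eqref{4.6}: the inequality $\ge$ comes from the same padded construction you use, and $\le$ from the observation that a solution of type $\tau+1$ would represent $\mathbf h$ with $k-1-2\tau<r_0(\mathbf h)$ terms. You instead prove the lower bound directly and get the upper bound by routing that same minimality contradiction through Theorem \ref{theorem1.5}, whose own proof is just \eqref{4.6} plus that observation.

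Your caveat about the degenerate case is apt but slightly misstated. When $r_0(\mathbf h)>k+1$, the floor is $\tau(k;\mathbf h)\le -1$ (not "negative or zero") while $U_{k+1,k}(P;\mathbf h)=0$. So the lower bound genuinely fails, and the hypothesis $r_0(\mathbf h)\le k+1$ must be added rather than interpreted. The paper's proof tacitly assumes this as well.
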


These theorems show that large values of the counting function $U_{k+1,k}(P;\mathbf h)$ are associated with a high degree of structure in the coefficient $k$-tuple $\mathbf h$. Here, the fewer the number of 
summands in \eqref{1.7} required to define $\mathbf h$, the greater the implied structural constraints on 
$\mathbf h$. Similar conclusions may be derived for the quantity $U_{s,k}(P;\mathbf h)$ when $s>k+1$, though we 
choose not to elaborate on this matter herein. Instead, we present a non-trivial paucity result that follows from 
Theorem \ref{theorem1.5} in a manner analogous to the corollaries presented above. We emphasise that this 
conclusion goes beyond the so-called square-root barrier, in the sense that the upper bound established for 
$U_{2t,k}(P;\mathbf h)$ grows more slowly than $P^t$, the square-root of the underlying available supply of 
variables.

\begin{corollary}\label{corollary1.7} Suppose that $k\ge 2$ and 
$\mathbf h\in \mathbb Z^k\setminus \{\mathbf 0\}$. Then, for each $\varepsilon>0$, one has
\[
U_{2t,k}(P;\mathbf h)\ll P^{t-1+\varepsilon}\quad (1\le t\le k).
\]
\end{corollary}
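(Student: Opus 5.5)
The plan is to deduce Corollary \ref{corollary1.7} from Theorem \ref{theorem1.5} (or equivalently Theorem \ref{theorem1.6}) by a fibration argument. I fix $2t-(k+1)$ of the variables when $2t\ge k+1$, or treat the excess equations as constraints when $2t<k+1$, so that the count reduces to $U_{k+1,k}(P;\mathbf h')$ for shifted tuples $\mathbf h'$.

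For the generic range $2t\ge k+1$, write $s=2t$ and $m=s-k-1\ge 0$. Fixing $x_{k+2},\dots,x_s$ gives
\[
U_{2t,k}(P;\mathbf h)=\sum_{|z_i|\le P}U_{k+1,k}(P;\mathbf h-\mathbf g(\mathbf z)),
\]
where $g_j(\mathbf z)=\sum_{i=1}^m z_i^{2j-1}$. By Theorem \ref{theorem1.6}, each summand is $\ll P^{\tau+\varepsilon}$ with $\tau=\lfloor (k+1-r_0)/2\rfloor$ and $r_0=r_0(\mathbf h-\mathbf g(\mathbf z))$. If $r_0\ge 1$, then $\tau\le k/2$, and the trivial bound $P^{m}$ on the outer sum gives a total of $P^{2t-k-1+k/2+\varepsilon}$. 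That is too weak, so I will need to balance.

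The balancing step is to split according to the value $r=r_0(\mathbf h-\mathbf g(\mathbf z))$. Having $r_0=r$ means that $\mathbf h$ is represented as a sum of $m+r$ odd-power terms in the fixed $\mathbf z$ and in the $a_i$. So the number of $\mathbf z$ with $r_0\le r$ is bounded by the number of solutions of $\sum_{i\le m} z_i^{2j-1}+\sum_{i\le r}a_i^{2j-1}=h_j$ for $1\le j\le k$, which is a $U_{m+r,k}$ count. This suggests an induction on $s$, ascending in $t$. The inductive hypothesis controls $U_{s',k}$ for smaller $s'$, and the extreme cases are handled by Theorem \ref{theorem1.6} directly.

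Concretely, I would bound
\[
U_{2t,k}(P;\mathbf h)\ll\sum_{r} N_r\,P^{\lfloor (k+1-r)/2\rfloor+\varepsilon},
\]
where $N_r$ is the number of $\mathbf z$ with $r_0=r$. The term $r_0=0$, that is $\mathbf h=\mathbf g(\mathbf z)$, must be treated with care, since there $U_{k+1,k}(P;\mathbf 0)$ is of size about $P^{(k+1)/2}$. I would estimate $N_r\le\#\{(\mathbf z,\mathbf a)\}$ using a cruder bound: since $\mathbf h\ne\mathbf 0$, the representation count of $\mathbf h$ by $m+r$ terms should save roughly $P^{(m+r+1)/2}$ relative to the trivial bound once $m+r\le k+1$. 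Together with the Vinogradov mean value estimates implicit in Theorem \ref{theorem1.6}, this should yield exponent $t-1$.

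When $2t\le k$, there are more equations than needed. I would simply drop the equations with $j>2t-1$, or rather keep the first $2t-1$ of them and regard the system as $U_{2t,2t-1}$, to which Theorem \ref{theorem1.6} applies with $k$ replaced by $2t-1$. If the truncated $\mathbf h$ is nonzero, then $r_0\ge 1$ and $\tau\le\lfloor (2t-1)/2\rfloor=t-1$, which is what we want. If the truncation vanishes but $\mathbf h\ne\mathbf 0$, then solutions of the truncated homogeneous system with $2t$ variables are essentially diagonal, pairing $x$ with $-x$ after Newton's identities. For such solutions all odd power sums vanish, contradicting $\mathbf h\ne\mathbf 0$, apart from $O(P^{t-1+\varepsilon})$ non-diagonal ones.

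I expect the main obstacle to be the range $k+1<2t\le 2k$. There the bound $\ll P^{t-1+\varepsilon}$ requires showing that the number of $\mathbf z$ with small $r_0$ is small in exact proportion to the gain in $\tau$. This is a counting problem of the same type at a lower level, so the argument should be an induction on $t$. I would try to verify the exponent bookkeeping $m+\lfloor (k+1-r)/2\rfloor-\lceil\cdot\rceil\le t-1$ first in the case $r=1$.
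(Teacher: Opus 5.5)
You have the right architecture, and in the main range it is the paper's. You fibre over $m=2t-k-1$ variables, bound each fibre by $P^{\tau+\varepsilon}$, and count the structured fibres by a smaller instance of the same problem. But the step that carries the whole corollary, the range $k+1<2t\le 2k$, is left unverified, and the mechanism you sketch for it fails. A saving of $P^{(m+r+1)/2}$ on $N_r$ gives a contribution $P^{(m+r-1)/2+\lfloor (k+1-r)/2\rfloor}=P^{t-1/2}$ whenever $k+1-r$ is even. There are no Vinogradov mean value estimates inside Theorem \ref{theorem1.6} to rescue this: its proof is only the divisor-bound argument of Lemma \ref{lemma4.1}. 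Nor does $r_0=0$ need special care; it is simply the largest fibre size, $\tau=\lfloor (k+1)/2\rfloor$.

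The missing idea is to feed the corollary back in as a strong induction hypothesis on an \emph{even} number of variables. Let $\tau=I_k(\mathbf h-\mathbf g(\mathbf z))$; this equals $\tau(k;\cdot)$ from Theorem \ref{theorem1.6} and also covers the case $\mathbf h=\mathbf g(\mathbf z)$. Pad the $r_0$-term representation with zeros to $k+1-2\tau$ terms $a_i$. Since $\mathbf z\mapsto(\mathbf z,\mathbf a)$ is injective, the number of such $\mathbf z$ is at most $U_{2t-2\tau,k}(P;\mathbf h)$, whence
\[
U_{2t,k}(P;\mathbf h)\ll P^{2t-k-1+\varepsilon}+\sum_{1\le \tau\le (k+1)/2}P^{\tau+\varepsilon}\,U_{2t-2\tau,k}(P;\mathbf h).
\]
This is exactly the inequality preceding \eqref{4.20}, with $\tau=r+1$. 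For $2t\ge k+2$ one has $1\le t-\tau<t$, so the hypothesis $U_{2t-2\tau,k}(P;\mathbf h)\ll P^{t-\tau-1+\varepsilon}$ cancels the fibre size exactly. The first term is at most $P^{t-1+\varepsilon}$ precisely because $t\le k$.

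The paper closes the same inequality differently. It uses $PU_{2t,k}\ll U_{2t+2,k}$ both to dispose of $t<\lceil (k+1)/2\rceil$ and to collapse the sum into $P^{1+\varepsilon}U_{2T-2,k}$. It then iterates down to $U_{k+1,k}$, or, when $k$ is even, to a separately treated $U_{k+2,k}$.

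Your truncation base cases are a valid alternative. If $\sum_{i=1}^{2t}x_i^{2j-1}=0$ for $1\le j\le t$, comparing $\prod_i(1-x_iz)$ with $\prod_i(1+x_iz)$ shows that the $x_i$ pair off completely, so there are in fact no non-diagonal exceptions at all. The exception is $t=1$: Theorem \ref{theorem1.6} is false for $k=1$ (e.g.\ $U_{2,1}(P;1)\asymp P$). There you must keep two equations and argue directly, as in the proof of Lemma \ref{lemma4.1}.
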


In common with previous work on paucity and quasi-paucity problems (see, for example \cite{PW2002, Woo1993}), 
our approach to proving Theorems \ref{theorem1.1}, \ref{theorem1.3}, \ref{theorem1.5} and \ref{theorem1.6} rests on the application 
of polynomial identities of multiplicative type. In fortunate circumstances, such identities permit divisor function 
estimates to be deployed showing that there are few integral solutions of the Diophantine systems of interest for 
typical tuples $\mathbf h$. For atypical choices of $\mathbf h$, however, one must contend with the possibility that 
these polynomial identities show only that the underlying variables are constrained in terms of divisors of $0$, 
thereby offering no useful information. One therefore requires a careful analysis of these atypical situations and 
their relation to diagonal structures within the associated Diophantine systems. The reader will gather from the 
discussion of \S2 the ideas necessary for the consideration of related problems.\par

Our basic parameter is $P$, a sufficiently large positive integer. Whenever $\varepsilon$ appears in a statement, 
either implicitly or explicitly, we assert that the statement holds for each $\varepsilon>0$. We make frequent use of 
vector notation in the form $\mathbf x=(x_1,\ldots ,x_r)$. Here, the dimension $r$ depends on the course of the 
argument.\par

Work on this paper was conducted while the first author was supported by Project Grant 2022-03717 from 
Vetenskapsr\aa det (Swedish Science Foundation), and while the second author was supported by NSF grant 
DMS-2502625 and Simons Fellowship in Mathematics SFM-00011955. The first author is grateful to Purdue 
University for its hospitality. The second author is grateful to the Institute 
for Advanced Study, Princeton, for hosting his sabbatical, during which period this paper was completed. Both 
authors thank Institut Mittag-Leffler and Mathematisches Forchungsinstitut Oberwolfach for hosting them during 
periods during which progress was made on this paper.

\section{Paucity in the affine cubic Vinogradov system}
Key to the proof of Theorem \ref{theorem1.1} is a polynomial identity having a multiplicative flavour. In order to 
describe this identity, we define the polynomials $s_j=s_j(x,y)$ by putting
\begin{equation}\label{2.1}
s_j(x,y)=x^j-y^j\quad (j\ge 1).
\end{equation}
One can check on the back of a small envelope that one has the identity
\begin{equation}\label{2.2}
s_1^4+3s_2^2-4s_1s_3=0.
\end{equation}
As a consequence of the relation \eqref{2.2}, one has an identity relating the $4$-variable polynomials 
$\sigma_j=\sigma_j(\mathbf x,\mathbf y)$, defined by
\begin{equation}\label{2.3}
\sigma_j(\mathbf x,\mathbf y)=x_1^j+x_2^j-y_1^j-y_2^j\quad (j\ge 1).
\end{equation}

\begin{lemma}\label{lemma2.1} One has
\begin{equation}\label{2.4}
\sigma_1(\mathbf x,\mathbf y)^4+3\sigma_2(\mathbf x,\mathbf y)^2-4\sigma_1(\mathbf x,\mathbf y)
\sigma_3(\mathbf x,\mathbf y)=12(x_1-y_1)(x_1-y_2)(x_2-y_1)(x_2-y_2).
\end{equation}
\end{lemma}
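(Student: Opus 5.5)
We plan to verify the identity \eqref{2.4} by exploiting translation invariance together with the two-variable identity \eqref{2.2}, rather than by brute expansion.

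\textbf{Translation invariance.} Write $F(\mathbf x,\mathbf y)=\sigma_1^4+3\sigma_2^2-4\sigma_1\sigma_3$. First we show that $F$ is unchanged under the shift $x_i\mapsto x_i+c$, $y_i\mapsto y_i+c$. Under this shift one has $\sigma_1\mapsto\sigma_1$ and $\sigma_2\mapsto \sigma_2+2c\sigma_1$, together with
\[
\sigma_3\mapsto \sigma_3+3c\sigma_2+3c^2\sigma_1 .
\]
Substituting these, the transformed expression is
\[
\sigma_1^4+3(\sigma_2+2c\sigma_1)^2-4\sigma_1(\sigma_3+3c\sigma_2+3c^2\sigma_1).
\]
The $c$-terms are $12c\sigma_1\sigma_2+12c^2\sigma_1^2-12c\sigma_1\sigma_2-12c^2\sigma_1^2=0$, so $F$ is translation invariant. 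The right-hand side of \eqref{2.4} is manifestly translation invariant as well.

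\textbf{Vanishing on the hyperplanes.} Next we show that $F$ vanishes whenever $x_i=y_l$ for some $i,l$. If, say, $x_1=y_1$, then $\sigma_j=s_j(x_2,y_2)$ for every $j$, and \eqref{2.2} gives $F=0$. The other three cases follow in the same way, using the symmetry of $F$ under $x_1\leftrightarrow x_2$ and under $y_1\leftrightarrow y_2$.

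\textbf{Divisibility and comparison of degrees.} The four linear forms $x_i-y_l$ are pairwise non-associate irreducibles in $\mathbb Q[\mathbf x,\mathbf y]$. Hence $F$ is divisible by their product $\Delta=(x_1-y_1)(x_1-y_2)(x_2-y_1)(x_2-y_2)$. Since $F$ is homogeneous of degree $4$, we get $F=\lambda\Delta$ for a constant $\lambda$.

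\textbf{Determining the constant.} To find $\lambda$, we evaluate at a convenient point, say $x_1=1$, $x_2=0$, $y_1=y_2=-1$. Then $\Delta=(2)(2)(1)(1)=4$. On the other side, $\sigma_1=3$, $\sigma_2=1-2=-1$ and $\sigma_3=1+2=3$, so
\[
F=81+3-36=48 .
\]
Thus $\lambda=12$, which is the identity \eqref{2.4}.

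The main obstacle is only the careful bookkeeping in the invariance and evaluation steps. We do not actually need invariance for the divisibility argument, but it serves as a consistency check.
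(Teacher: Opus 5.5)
Your proposal is correct and follows essentially the same route as the paper. Both arguments show the left-hand side vanishes on each hyperplane $x_i=y_l$ via the two-variable identity \eqref{2.2}, deduce divisibility by the four linear factors, and compare degrees. The only difference is that you fix the constant by evaluating at a point, whereas the paper reads off the coefficient of $x_1^2x_2^2$; your translation-invariance step is correct but, as you note, not needed.
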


\begin{proof} If we specialise by setting $x_2=y_2$, then we find that
\[
\sigma_j(\mathbf x,\mathbf y)=s_j(x_1,y_1)\quad (j=1,2,3).
\]
Under such circumstances, the left hand side of \eqref{2.4} is equal to $s_1^4+3s_2^2-4s_1s_3$, which is $0$. It 
therefore follows that the left hand side of \eqref{2.4} is divisible by $x_2-y_2$, and symmetrical arguments reveal 
that $x_1-y_1$, $x_1-y_2$ and $x_2-y_1$ are also factors. The identity \eqref{2.4} then follows on observing that the left 
and right hand sides of \eqref{2.4} have the same degree $4$, with the constant factor $12$ being fixed by the 
coefficient of $x_1^2x_2^2$ on the left hand side of the equation.
\end{proof}

We are now equipped to confirm the conclusion of Theorem \ref{theorem1.1}.

\begin{proof}[Proof of Theorem \ref{theorem1.1}]
Let $\eta\in (0,1)$ and $\mathbf h\in \mathbb Z^3$. Whenever $\mathbf x,\mathbf y$ is an integral solution of 
\eqref{1.1} counted by $S_2(P;\mathbf h)$, it follows from \eqref{2.3} that $\sigma_j(\mathbf x,\mathbf y)=h_j$ 
$(j=1,2,3)$. Hence, from Lemma \ref{lemma2.1} we have
\begin{equation}\label{2.5}
12(x_1-y_1)(x_1-y_2)(x_2-y_1)(x_2-y_2)=\Psi (\mathbf h),
\end{equation}
in which
\[
\Psi(\mathbf h)=h_1^4+3h_2^2-4h_1h_3.
\]
Since $1\le x_i,y_i\le P$ $(i=1,2)$, we have $|\Psi(\mathbf h)|<12P^4$. We divide into cases.\par

Suppose first that $\Psi(\mathbf h)\ne 0$. Then a standard divisor function estimate shows that there are 
$O(P^\varepsilon)$ possible choices for integers $d_i$ $(1\le i\le 4)$ having the property that
\begin{equation}\label{2.6}
x_1-y_1=d_1,\quad x_1-y_2=d_2,\quad x_2-y_1=d_3,\quad x_2-y_2=d_4.
\end{equation}
Fixing any one such choice of $\mathbf d$, we may substitute
\begin{equation}\label{2.7}
y_1=x_1-d_1,\quad y_2=x_1-d_2,\quad x_2=x_1-d_1+d_3
\end{equation}
into the system \eqref{1.1}. Thus, we find that
\begin{equation}\label{2.8}
x_1^2+(x_1-d_1+d_3)^2-(x_1-d_1)^2-(x_1-d_2)^2=h_2,
\end{equation}
whence
\begin{equation}\label{2.9}
2x_1(d_2+d_3)+(d_1-d_3)^2-d_1^2-d_2^2=h_2.
\end{equation}
This equation fixes the value of $x_1$ unless $d_3=-d_2$, which we henceforth assume to be the case. Likewise, one 
has 
\[
x_1^3+(x_1-d_1+d_3)^3-(x_1-d_1)^3-(x_1-d_2)^3=h_3,
\]
whence
\[
3x_1((d_1+d_2)^2-d_1^2-d_2^2)-(d_1+d_2)^3+d_1^3+d_2^3=h_3.
\]
This equation fixes the value of $x_1$ unless
\begin{equation}\label{2.10}
(d_1+d_2)^2-d_1^2-d_2^2=0.
\end{equation}
We are therefore forced to conclude that the value of $x_1$ is fixed, unless one has $2d_1d_2=0$, which is to say 
that one at least of $d_1$ and $d_2$ is equal to $0$. The latter eventuality implies that $x_1=y_1$ or $x_1=y_2$. 
Either such circumstance implies, via \eqref{2.5}, that $\Psi(\mathbf h)=0$, contradicting our earlier assumption in 
this case. Thus $x_1$ is indeed determined via $\mathbf h$ and $\mathbf d$, so that we may infer from \eqref{2.7} 
that $y_1$, $y_2$ and $x_2$ are also determined. In this first scenario in which $\Psi(\mathbf h)\ne 0$, we therefore 
conclude that $S_2(P;\mathbf h)\ll P^\varepsilon$.\par

If it is the case that $S_2(P;\mathbf h)>P^\eta$, then whenever $P$ is large enough in terms of $\varepsilon$ and 
$\eta$, we cannot be in this first scenario. Thus, we have $\Psi(\mathbf h)=0$, and it follows from \eqref{2.5} that
\begin{equation}\label{2.11}
x_1=y_1,\quad \text{or}\quad x_1=y_2,\quad \text{or}\quad x_2=y_1,\quad \text{or}\quad x_2=y_2.
\end{equation}
By relabelling variables, there is no loss of generality in supposing that $x_2=y_2$. We then deduce from \eqref{1.1} 
that
\begin{equation}\label{2.12}
x_1^j-y_1^j=h_j\quad (j=1,2,3).
\end{equation}
In particular, there exist integers $a$ and $b$ with $1\le a,b\le P$ for which one has the relation \eqref{1.2}, namely 
$h_j=a^j-b^j$ $(j=1,2,3)$. If one has $a=b$, and hence $h_j=0$ $(j=1,2,3)$, then plainly $x_1=y_1$. Keeping in 
mind our relabelling of variables, we then have
\[
S_2(P;{\mathbf 0})=2P^2-P.
\]
When $a\ne b$, meanwhile, we find from \eqref{2.12} that
\[
x_1-y_1=a-b\quad \text{and}\quad x_1+y_1=\frac{x_1^2-y_1^2}{x_1-y_1}=\frac{a^2-b^2}{a-b}=a+b.
\]
Thus $x_1=a$ and $y_1=b$, and the solutions of \eqref{1.1} of this kind counted by $S_2(P;\mathbf h)$ satisfy
\[
\{x_1,x_2\}=\{a,t\}\quad \text{and}\quad \{y_1,y_2\}=\{b,t\},
\]
with $1\le t\le P$. Again noting our earlier relabelling of variables, we find that when $a\ne b$, one has
\[
S_2(P;\mathbf h)=4P.
\]
This completes the proof of the theorem.
\end{proof}

The corollary to Theorem \ref{theorem1.1} requires little additional effort to establish.

\begin{proof}[Proof of Corollary \ref{corollary1.2}] We suppose throughout that ${\mathbf h}\ne {\mathbf 0}$. One 
has
\begin{equation}\label{2.13}
S_3(P;\mathbf h)=\sum_{1\le x_3,y_3\le P}S_2(P;\mathbf h'(x_3,y_3)),
\end{equation}
where $\mathbf h'=(h_1',h_2',h_3')$ and
\begin{equation}\label{2.14}
h_j'(u,v)=h_j-u^j+v^j\quad (j\ge 1).
\end{equation}
Let $\eta>0$ be arbitrarily small. From Theorem \ref{theorem1.1}, we find that $S_2(P;\mathbf h')\le P^\eta$ unless, 
for some positive integers $a$ and $b$ with $1\le a,b\le P$, one has $h_j'=a^j-b^j$ $(j=1,2,3)$, in which case
\begin{equation}\label{2.15}
x_3^j-y_3^j+a^j-b^j=h_j\quad (j=1,2,3).
\end{equation}
When $a\ne b$, one then has $S_2(P;\mathbf h')=4P$, and when $a=b$ one has $S_2(P;\mathbf h')=2P^2-P$. We 
therefore conclude from \eqref{2.13} that
\begin{equation}\label{2.16}
S_3(P;\mathbf h)\ll P^{2+\eta}+P\Upsilon_1+P^2\Upsilon_2,
\end{equation}
where $\Upsilon_1$ denotes the number of solutions of \eqref{2.15} with $1\le a,b,x_3,y_3\le P$ and $a\ne b$, and 
$\Upsilon_2$ denotes the corresponding number of solutions with $a=b=1$.\par

A second application of Theorem \ref{theorem1.1} shows that $\Upsilon_1=O(P)$ whenever 
$\mathbf h\ne \mathbf 0$. Turning next to the task of estimating $\Upsilon_2$, in which case we may assume that 
$a=b=1$, we observe that the equations \eqref{2.15} yield $h_2=h_3=0$ whenever $h_1=0$. Consequently, when $\mathbf h \neq \mathbf 0$ we must have $h_1 \neq 0$, and then we find that 
\[
x_3-y_3=h_1\quad \text{and}\quad x_3+y_3=h_2/h_1.
\]
Thus, we infer that $x_3$ and $y_3$ are determined uniquely by $h_1$ and $h_2$ when $\mathbf h\ne {\mathbf 0}$. 
Hence, when $a=b=1$, one sees that $\Upsilon_2=O(1)$. Then we conclude from \eqref{2.16} that whenever 
$\mathbf h\ne \mathbf 0$, one has $S_3(P;\mathbf h)\ll P^{2+\varepsilon}$. This completes the proof of the 
corollary.
\end{proof}

We note that scenario (a) of Theorem \ref{theorem1.1} may be more explicitly described. Thus, if $h_j=a^j-b^j$ 
$(j=1,2,3)$ with $a\ne b$, then
\[
h_1=a-b\quad \text{and}\quad h_2/h_1=a+b,
\]
whence
\[
a=\tfrac{1}{2}(h_2/h_1+h_1)\quad \text{and}\quad b=\tfrac{1}{2}(h_2/h_1-h_1).
\]
One then has
\[
h_3=a^3-b^3=\frac{1}{8}\Bigl( \Bigl( \frac{h_2}{h_1}+h_1\Bigr)^3-\Bigl( \frac{h_2}{h_1}-h_1\Bigr)^3\Bigr),
\]
so that $4h_1h_3=h_1^4+3h_2^2$, as should have been apparent from the condition $\Psi(\mathbf h)=0$ already 
encountered in the proof of Theorem \ref{theorem1.1}.

\section{Paucity in a relative of the affine Vinogradov system}
In initiating the argument employed to establish Theorem \ref{theorem1.1}, one rapidly discovers that the system 
\eqref{1.3} central to Theorem \ref{theorem1.3} is more delicate to analyse than that underlying Theorem 
\ref{theorem1.1}. We again define the polynomials $s_j=s_j(x,y)$ as in \eqref{2.1}. Then, one may check on the back 
of a somewhat larger envelope that one has the identity
\begin{equation}\label{3.1}
s_2^3+s_1^4s_2-2s_1^2s_4=0.
\end{equation}
As a consequence of this relation, one has an identity relating the $4$-variable polynomials 
$\sigma_j(\mathbf x,\mathbf y)$ defined in \eqref{2.3}, together with the additional polynomial
\begin{equation}\label{3.2}
\tau(\mathbf x,\mathbf y)=(x_1^2+x_1x_2+x_2^2)-(y_1^2+y_1y_2+y_2^2).
\end{equation}

\begin{lemma}\label{lemma3.1}
One has
\begin{equation}\label{3.3}
\begin{aligned}\sigma_2(\mathbf x,\mathbf y)^3+&\sigma_1(\mathbf x,\mathbf y)^4
\sigma_2(\mathbf x,\mathbf y)-2\sigma_1(\mathbf x,\mathbf y)^2\sigma_4(\mathbf x,\mathbf y)\\
&=8(x_1-y_1)(x_1-y_2)(x_2-y_1)(x_2-y_2)\tau(\mathbf x,\mathbf y).\end{aligned}
\end{equation}
\end{lemma}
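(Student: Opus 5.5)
We will prove Lemma \ref{lemma3.1} in the same way as Lemma \ref{lemma2.1}: find enough linear factors of the left hand side by specialisation, and then compare degrees and one coefficient. Write $\Phi(\mathbf x,\mathbf y)=\sigma_2^3+\sigma_1^4\sigma_2-2\sigma_1^2\sigma_4$, a homogeneous polynomial of degree $6$.

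\emph{Step 1: the four linear factors.} Specialise $x_2=y_2$. Then $\sigma_j(\mathbf x,\mathbf y)=s_j(x_1,y_1)$ for $j=1,2,4$, so by \eqref{3.1} the polynomial $\Phi$ vanishes identically. Hence $x_2-y_2$ divides $\Phi$. The polynomial $\Phi$ is symmetric under interchanging $x_1$ and $x_2$, and under interchanging $y_1$ and $y_2$. It follows that $x_1-y_1$, $x_1-y_2$ and $x_2-y_1$ also divide $\Phi$. These four factors are pairwise coprime, so
\[
\Phi=(x_1-y_1)(x_1-y_2)(x_2-y_1)(x_2-y_2)\,Q(\mathbf x,\mathbf y),
\]
where $Q$ is homogeneous of degree $2$. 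Moreover $Q$ is symmetric in $x_1,x_2$ and in $y_1,y_2$.

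\emph{Step 2: identifying $Q$ (the main obstacle).} Unlike the cubic case, degree counting alone does not pin down $Q$. I would proceed as follows.

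\begin{itemize}
\item[(i)] \emph{Antisymmetry.} Swapping $\mathbf x\leftrightarrow\mathbf y$ negates every $\sigma_j$. Since $\Phi$ has odd total degree in the $\sigma_j$, $\Phi$ changes sign. The four-fold product is invariant under this swap, because it picks up the sign $(-1)^4$ and the factors are merely permuted. Hence $Q(\mathbf y,\mathbf x)=-Q(\mathbf x,\mathbf y)$.
\item[(ii)] \emph{Translation invariance.} Each $x_i-y_j$ and each $s_j$-type identity is unchanged by $\mathbf x\mapsto\mathbf x+c$, $\mathbf y\mapsto\mathbf y+c$, so one expects $Q$ to be translation invariant. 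I would justify this by checking that $\Phi$ is unchanged under the shift, perhaps more easily by working directly with the specialisations below.
\item[(iii)] \emph{Reduction to two coefficients.} A symmetric, antisymmetric quadratic has the form
\[
Q=\alpha\bigl(x_1^2+x_2^2-y_1^2-y_2^2\bigr)+\beta\bigl(x_1x_2-y_1y_2\bigr)+\gamma\bigl(x_1+x_2-y_1-y_2\bigr)(x_1+x_2+y_1+y_2)\text{-type terms}.
\]
Mixed terms $x_iy_j$ cancel under antisymmetry. Quadratic terms in $x_1+x_2+y_1+y_2$ are excluded by translation invariance. Together these force $Q=\alpha\sigma_2+\beta(x_1x_2-y_1y_2)$.
\item[(iv)] \emph{Fixing the constants.} To determine $\alpha$ and $\beta$, I would specialise to $\mathbf y=\mathbf 0$ and compare coefficients. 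With $y_1=y_2=0$, the product becomes $x_1^2x_2^2$. The left side becomes
\[
(x_1^2+x_2^2)^3+(x_1+x_2)^4(x_1^2+x_2^2)-2(x_1+x_2)^2(x_1^4+x_2^4).
\]
Comparing the coefficients of $x_1^6$ and $x_1^4x_2^2$ should give $\alpha=\beta=8$. The coefficient of $x_1^6$ is $1+1-2=0$, consistent with divisibility by $x_1^2x_2^2$. The coefficient of $x_1^4x_2^2$ works out to $3+6+1-2\cdot 1=8$, and the coefficient of $x_1^3x_2^3$ gives $\beta$ similarly. This yields $Q=8\tau(\mathbf x,\mathbf y)$.
\end{itemize}

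A fallback, if the symmetry reductions prove awkward, is to expand $\Phi$ directly in the variables $u_i=x_i-y_1$. This uses translation invariance to set $y_1=0$, and then one divides explicitly.
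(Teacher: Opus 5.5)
Your Step~1 is exactly the paper's argument. Your Steps (i), (iii) and (iv) are a good way to carry out the division, which the paper merely asserts and suggests delegating to computer algebra. However, Step (ii) is false.

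\textbf{Step (ii) fails.} The left hand side $\Phi$ is \emph{not} invariant under $\mathbf x\mapsto\mathbf x+c$, $\mathbf y\mapsto\mathbf y+c$. The four linear factors are invariant, but $\tau(\mathbf x+c,\mathbf y+c)=\tau(\mathbf x,\mathbf y)+3c\,\sigma_1(\mathbf x,\mathbf y)$. Concretely:
\begin{itemize}
\item $(\mathbf x,\mathbf y)=((1,2),(0,0))$ gives $\sigma_1=3$, $\sigma_2=5$, $\sigma_4=17$ and $\Phi=224$;
\item the shifted point $((2,3),(1,1))$ gives $\sigma_1=3$, $\sigma_2=11$, $\sigma_4=95$ and $\Phi=512$.
\end{itemize}
Your expectation is inherited from the cubic case. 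There the left side of Lemma~\ref{lemma2.1} equals $12\prod_{i,j}(x_i-y_j)$ and is therefore shift invariant. Replacing $\sigma_3$ by $\sigma_4$ destroys this: the identity \eqref{3.1} holding for shifted variables does not make $\Phi$, as a polynomial in $\mathbf x,\mathbf y$, shift invariant.

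So the check you propose in (ii) would fail, and your fallback, which uses translation invariance to set $y_1=0$, is not legitimate. Your own computation exposes the inconsistency. Since $\sigma_2\mapsto\sigma_2+2c\sigma_1$ and $x_1x_2-y_1y_2\mapsto x_1x_2-y_1y_2+c\sigma_1$, a shift-invariant $Q=\alpha\sigma_2+\beta(x_1x_2-y_1y_2)$ would require $\beta=-2\alpha$. That contradicts the values $\alpha=\beta=8$ you obtain in (iv).

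\textbf{Step (ii) is not needed.} The quadratics invariant under $x_1\leftrightarrow x_2$ and $y_1\leftrightarrow y_2$ are spanned by $x_1^2+x_2^2$, $x_1x_2$, $y_1^2+y_2^2$, $y_1y_2$ and $(x_1+x_2)(y_1+y_2)$. Antisymmetry under $\mathbf x\leftrightarrow\mathbf y$ removes the last of these. It leaves exactly the span of $\sigma_2$ and $x_1x_2-y_1y_2$. Your $\gamma$-term $(x_1+x_2)^2-(y_1+y_2)^2=\sigma_2+2(x_1x_2-y_1y_2)$ already lies in this span, so there was nothing to exclude.

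With (ii) deleted, (i) and the two symmetries give $Q=\alpha\sigma_2+\beta(x_1x_2-y_1y_2)$. Your specialisation $\mathbf y=\mathbf 0$ then finishes the proof. There the left side equals $8x_1^2x_2^2(x_1^2+x_1x_2+x_2^2)$, and $Q(\mathbf x,\mathbf 0)=\alpha(x_1^2+x_2^2)+\beta x_1x_2$. The coefficients of $x_1^4x_2^2$ and $x_1^3x_2^3$ give $\alpha=\beta=8$, that is, $Q=8\tau$.

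The corrected argument is complete. It has a real advantage over the paper's route: a two-coefficient check replaces a full polynomial division.
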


\begin{proof}
We again observe that by specialising $x_2=y_2$, one has
\[
\sigma_j(\mathbf x,\mathbf y)=s_j(x_1,y_1)\quad (j=1,2,4).
\]
Thus, in view of the identity \eqref{3.1}, it is evident as in the proof of Lemma \ref{lemma2.1} that the left hand side 
of \eqref{3.3} is divisible by $x_2-y_2$, with symmetrical arguments uncovering the additional factors $x_1-y_1$, 
$x_1-y_2$, and $x_2-y_1$. One may then divide the left hand side of \eqref{3.3} by the product
\[
(x_1-y_1)(x_1-y_2)(x_2-y_1)(x_2-y_2)
\]
to reveal the quotient $8\tau(\mathbf x,\mathbf y)$, with $\tau(\mathbf x,\mathbf y)$ defined by \eqref{3.2}. 
Readers wishing to conserve energy might choose to employ a computer algebra assistant in this computation. This completes the 
proof of the lemma.
\end{proof}

Although we are equipped to establish Theorem \ref{theorem1.3} using Lemma \ref{lemma3.1}, the details are more 
complicated than was the case in the corresponding argument employed in the proof of Theorem \ref{theorem1.1}.

\begin{proof}[Proof of Theorem \ref{theorem1.3}] Let $\eta\in (0,1)$ and $\mathbf h\in \mathbb Z^3$. Whenever 
$\mathbf x,\mathbf y$ is an integral solution of \eqref{1.3} counted by $T_2(P;\mathbf h)$, it follows from 
\eqref{2.3} that $\sigma_j(\mathbf x,\mathbf y)=h_j$ $(j=1,2,4)$. Hence, on applying Lemma \ref{lemma3.1}, we 
perceive that
\begin{equation}\label{3.4}
8(x_1-y_1)(x_1-y_2)(x_2-y_1)(x_2-y_2)\tau(\mathbf x,\mathbf y)=\Phi(\mathbf h),
\end{equation}
in which $\Phi(\mathbf h)=h_2^3+h_1^4h_2-2h_1^2h_4$. On this occasion, since $1\le x_i,y_i\le P$ $(i=1,2)$, we 
have $|\Phi(\mathbf h)|<24P^6$. We divide into cases.\par

Suppose first that $\Phi(\mathbf h)\ne 0$. Then a standard divisor function estimate shows that there are 
$O(P^\varepsilon)$ possible choices for integers $d_i$ $(1\le i\le 4)$ having the property that the relations 
\eqref{2.6} hold. Fixing any one such choice for $\mathbf d$, we may again substitute \eqref{2.7} into the system 
\eqref{1.3}. Just as in the proof of Theorem \ref{theorem1.1}, we again obtain \eqref{2.8} and \eqref{2.9}. The latter 
equation fixes the value of $x_1$ unless $d_3=-d_2$, which we henceforth assume to be the case. On this occasion, 
we have
\[
x_1^4+(x_1-d_1+d_3)^4-(x_1-d_1)^4-(x_1-d_2)^4=h_4,
\]
whence
\[
6x_1^2\bigl( (d_1+d_2)^2-d_1^2-d_2^2\bigr) -4x_1\bigl( (d_1+d_2)^3-d_1^3-d_2^3\bigr) 
+(d_1+d_2)^4-d_1^4-d_2^4=h_4.
\]
This equation fixes the value of $x_1$ to be one of the two roots of a quadratic equation unless \eqref{2.10} holds. 
We are therefore forced to conclude that the value of $x_1$ is fixed, unless one at least of $d_1$ and $d_2$ is $0$, 
an eventuality that again implies that $x_1=y_1$ or $x_1=y_2$. In either circumstance, it follows from \eqref{3.4} 
that $\Phi(\mathbf h)=0$, contradicting our earlier assumption. We thus conclude that $x_1$ is indeed determined 
by fixed choices for $\mathbf h$ and $\mathbf d$, whence from \eqref{2.7} we see that $y_1$, $y_2$ and $x_2$ are 
also determined. In this first scenario, in which $\Phi(\mathbf h)\ne 0$, therefore, we are forced to conclude that
\[
T_2(P;\mathbf h)\ll P^\varepsilon.
\]

\par If it is the case that $T_2(P;\mathbf h)>P^\eta$, then whenever $P$ is large enough in terms of $\varepsilon$ 
and $\eta$, we cannot be in the first scenario. Thus, we have $\Phi(\mathbf h)=0$, and it follows from \eqref{3.4} 
that, either one at least of the relations \eqref{2.11} must hold, or else
\begin{equation}\label{3.5}
\tau(\mathbf x,\mathbf y)=(x_1^2+x_1x_2+x_2^2)-(y_1^2+y_1y_2+y_2^2)=0.
\end{equation}

\par Before embarking on the analysis of this second case, we begin by examining the consequences of the assumption that the relation 
\eqref{3.5} holds. In this situation, it follows from the system of equations \eqref{1.3} that
\begin{align*}
(y_1+y_2+h_1)^2-&2(y_1^2+y_1y_2+y_2^2)+(y_1^2+y_2^2+h_2)\\
&=(x_1+x_2)^2-2(x_1^2+x_1x_2+x_2^2)+(x_1^2+x_2^2)=0,
\end{align*}
whence
\begin{equation}\label{3.6}
2h_1(y_1+y_2)+h_1^2+h_2=0.
\end{equation}
On substituting this relation into the linear equation of \eqref{1.3}, it follows that one also has
\begin{equation}\label{3.7}
2h_1(x_1+x_2)-h_1^2+h_2=0.
\end{equation}
We now make use of \eqref{3.6} and \eqref{3.7}, respectively, to deduce the two relations
\begin{align*}
8h_1^2(y_1^2+y_2^2+h_2)&=2\bigl( 4h_1^2(y_1^2+h_2)+(h_1^2+h_2+2h_1y_1)^2\bigr) \\
&=\bigl( 4h_1y_1+(h_1^2+h_2)\bigr)^2+8h_1^2h_2+(h_1^2+h_2)^2
\end{align*}
and
\begin{align*}
8h_1^2(x_1^2+x_2^2)&=2\bigl( 4h_1^2x_1^2+(h_1^2-h_2-2h_1x_1)^2\bigr) \\
&=\bigl( 4h_1x_1-(h_1^2-h_2)\bigr)^2+(h_1^2-h_2)^2.
\end{align*}
Hence, on substituting these formul{\ae} into the quadratic equation of \eqref{1.3}, we infer that
\begin{equation}\label{3.8}
\bigl( 4h_1x_1-(h_1^2-h_2)\bigr)^2-\bigl( 4h_1y_1+(h_1^2+h_2)\bigr)^2=12h_2h_1^2.
\end{equation}

\par We now begin our investigation of the scenario in which 
\[
\Phi(\mathbf h)=h_2^3+h_1^4h_2-2h_1^2h_4=0,
\]
but none of the relations \eqref{2.11} hold. In that scenario, we must therefore have \eqref{3.5}, and consequently 
\eqref{3.8}. Our goal is to show that these conditions are in fact incompatible with the assumption that 
$T_2(P;\mathbf h)>P^\eta$. In order to see this, consider first the case in which $h_1=0$. Then since 
$\Phi(\mathbf h)=0$, it follows that $h_2=0$, and we have
\[
\left.\begin{aligned}
x_1^2+x_2^2&=y_1^2+y_2^2\\
x_1+x_2&=y_1+y_2
\end{aligned}\right\}.
\]
This forces us to conclude that $\{ x_1,x_2\}=\{y_1,y_2\}$, violating our assumption that none of the relations 
\eqref{2.11} hold. It follows that we must have $h_1 \neq 0$.\par

Consider next the case where $h_1 \neq 0$ but $h_2=0$. This then implies that $h_4=0$, so that we must have
\[
\left.\begin{aligned}
x_1^4+x_2^4&=y_1^4+y_2^4\\
x_1^2+x_2^2&=y_1^2+y_2^2
\end{aligned}\right\} .
\]
These equations imply that $\{ x_1^2,x_2^2\}=\{y_1^2,y_2^2\}$, and since $1\le x_i,y_i\le P$, it follows that 
$\{x_1,x_2\}=\{y_1,y_2\}$. This again contradicts our assumption that none of the relations \eqref{2.11} hold.\par

We may therefore suppose that $12h_2h_1^2\ne 0$. Under such circumstances, an elementary divisor function estimate leads from \eqref{3.8} to the conclusion that there are at most 
$O(P^\varepsilon)$ possible choices for integers $e_1$ and $e_2$ for which
\[
\left. \begin{aligned}
\bigl( 4h_1x_1-(h_1^2-h_2)\bigr)-\bigl( 4h_1y_1+(h_1^2+h_2)\bigr) &=e_1\\
\bigl( 4h_1x_1-(h_1^2-h_2)\bigr)+\bigl( 4h_1y_1+(h_1^2+h_2)\bigr) &=e_2\
\end{aligned}
\right\}.
\]
Fixing any one choice for $e_1$ and $e_2$, it follows that both $x_1$ and $y_1$ are determined. We then find from 
\eqref{3.6} and \eqref{3.7} that $x_2$ and $y_2$ are also determined. Consequently, the number of solutions of this 
type is at most $O(P^\varepsilon)$. Since we had assumed that $T_2(P;\mathbf h) > P^{\eta}$, it follows that this scenario is also excluded. This affirms our claim that the assumption that none of the relations \eqref{2.11} hold is incompatible with the condition that $T_2(P; \mathbf h)>P^{\eta}$.  \par

It remains to consider the scenario in which one of the relations \eqref{2.11} does hold. Here, by relabelling variables, 
there is no loss of generality in supposing that $x_2=y_2$. We then deduce from \eqref{1.3} that
\begin{equation}\label{3.9}
x_1^j-y_1^j=h_j\quad (j=1,2,4).
\end{equation}
In particular, there exist integers $a$ and $b$ with $1\le a,b\le P$ for which one has $h_j=a^j-b^j$ $(j=1,2,4)$. If 
one has $a=b$, and hence $h_j=0$ $(j=1,2,4)$, then plainly $x_1=y_1$, and we are in case (b) of the theorem. In 
view of our relabelling of variables, one then has
\[
T_2(P;\mathbf 0)=2P^2-P.
\]
When $a\ne b$, meanwhile, we find from \eqref{3.9}, just as in the proof of Theorem \ref{theorem1.1}, that $x_1=a$ 
and $y_1=b$, and the solutions of the system \eqref{1.3} of this kind counted by $T_2(P;\mathbf h)$ satisfy
\[
\{x_1,x_2\}=\{a,t\}, \quad \{y_1,y_2\}=\{b,t\},
\]
with $1\le t\le P$. Again accounting for our earlier relabelling of variables, we see that we are in case (a) of the 
theorem, and that when $a\ne b$ one has  
\[
T_2(P;\mathbf h)=4P.
\]

\par We have shown that when $P$ is large enough and $T_2(P;\mathbf h)>P^\eta$, then we are either in case (a) 
or in case (b) of the theorem. In all other circumstances, we have confirmed that 
$T_2(P;\mathbf h)=O(P^{\varepsilon})$. The proof of the theorem is therefore complete.
\end{proof}

We follow the strategy of the proof of Corollary \ref{corollary1.2} in our proof of Corollary \ref{corollary1.4}.

\begin{proof}[Proof of Corollary \ref{corollary1.4}] We proceed using an argument almost identical to that 
establishing Corollary \ref{corollary1.2}. We may suppose throughout that $\mathbf h\ne \mathbf 0$. One has
\begin{equation}\label{3.10}
T_3(P;\mathbf h)=\sum_{1\le x_3,y_3\le P}T_2(P;\mathbf h'(x_3,y_3)),
\end{equation}
where $\mathbf h'=(h_1',h_2',h_4')$ is defined as in \eqref{2.14}. Let $\eta>0$ be arbitrarily small. From Theorem 
\ref{theorem1.3}, we know that $T_2(P;\mathbf h')\le P^\eta$ unless, for some positive integers $a$ and $b$ with 
$1\le a,b\le P$, one has $h_j'=a^j-b^j$ $(j=1,2,4)$, in which case
\begin{equation}\label{3.11}
x_3^j-y_3^j+a^j-b^j=h_j\quad (j=1,2,4).
\end{equation}
When $a\ne b$, one has $T_2(P;\mathbf h')=4P$, and when instead we have $a=b$, then one has 
$T_2(P;\mathbf h')=2P^2-P$. We therefore conclude from \eqref{3.10} that
\begin{equation}\label{3.12}
T_3(P;\mathbf h)\ll P^{2+\varepsilon}+P\Upsilon_1+P^2\Upsilon_2,
\end{equation}
where $\Upsilon_1$ now denotes the number of solutions of \eqref{3.11} with $1\le a,b,x_3,y_3\le P$ and 
$a\ne b$, and $\Upsilon_2$ denotes the corresponding number of solutions with $a=b=1$.\par

Again applying Theorem \ref{theorem1.3}, we find that $\Upsilon_1=O(P)$ whenever $\mathbf h\ne \mathbf 0$. 
In order to bound $\Upsilon_2$, we may assume that $a=b=1$, and observe that the equations \eqref{3.11} yield 
$h_1=h_2=h_4=0$ whenever $h_1=0$. When $h_1\ne 0$, meanwhile, we find as in the discussion concluding the 
proof of Corollary \ref{corollary1.2} that
\[
x_3=\tfrac{1}{2}(h_2/h_1+h_1)\quad \text{and}\quad y_3=\tfrac{1}{2}(h_2/h_1-h_1).
\]
Hence, when $a=b=1$, one sees that $\Upsilon_2=O(1)$. Then we conclude from \eqref{3.12} that whenever 
$\mathbf h\ne \mathbf 0$, one has $T_3(P;\mathbf h)\ll P^{2+\varepsilon}$. This completes the proof of the 
corollary.
\end{proof}

\section{Paucity in affine Br\"udern-Robert systems}
In this section we investigate the Diophantine system \eqref{1.5} previously considered, in the case 
$\mathbf h=\mathbf 0$, by Br\"udern and Robert \cite{BR2012}. In order to illustrate our ideas without the 
distraction of superficial complications, we restrict attention to the situation with $s=k+1$. Thus, when 
$\mathbf h\in \mathbb Z^k$, we consider the number of integral solutions $U_{k+1,k}(P;\mathbf h)$ of the system 
of equations
\begin{equation}\label{4.1}
\sum_{i=1}^{k+1}x_i^{2j-1}=h_j\quad (1\le j\le k),
\end{equation}
with $|x_i|\le P$ $(1\le i\le k+1)$.

Consider a fixed solution $(x_1,\ldots ,x_s)$ of the system \eqref{1.5} counted by $U_{s,k}(P;\mathbf h)$. It is possible 
that one may relabel variables in such a manner that, for some integer $j$ with $0\le j\le s/2$, one has
\begin{equation}\label{4.2}
x_{2i-1}+x_{2i}=0\quad (1\le i\le j),
\end{equation}
and so that for no $l$ and $m$ with $2j<l<m\le s$ does one have
\begin{equation}\label{4.3}
x_l+x_m=0.
\end{equation}
We shall describe an $s$-tuple $\mathbf x$ having this property as being of {\it type $j$}. We write 
$V_{s,k}^{(j)}(P;\mathbf h)$ for the number of solutions $\mathbf x$ of \eqref{1.5} counted by $U_{s,k}(P;\mathbf h)$ 
having type $j$.\par

Next, for a fixed $k$-tuple $\mathbf h$, we denote by $I_k(\mathbf h)$ the largest integer $j$ 
having the property that there exists some $(k+1)$-tuple $\mathbf x$ of type $j$, with $|x_i|\le P$ $(1\le i\le k+1)$, 
satisfying \eqref{4.1}. Thus, we have $I_k(\mathbf h)\le (k+1)/2$, and
\begin{equation}\label{4.4}
U_{k+1,k}(P;\mathbf h)=\sum_{0\le j\le I_k(\mathbf h)}V_{k+1,k}^{(j)}(P;\mathbf h).
\end{equation}
Observe that, when $V_{k+1,k}^{(j)}(P;\mathbf h)>0$, the relations \eqref{4.2} ensure that 
$V_{k+1,k}^{(j)}(P;\mathbf h)\ge P^j$. It follows, in particular, that
\begin{equation}\label{4.5}
U_{k+1,k}(P;\mathbf h)\ge P^{I_k(\mathbf h)}.
\end{equation}
Our proofs of Theorems \ref{theorem1.5} and \ref{theorem1.6} make critical use of a corresponding upper bound 
which is a consequence of the following auxiliary lemma. 

\begin{lemma}\label{lemma4.1} One has $V_{k+1,k}^{(j)}(P;\mathbf h)\ll P^{j+\varepsilon}$.
\end{lemma}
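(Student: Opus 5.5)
Fix a type $j$ and a relabelling as in \eqref{4.2} and \eqref{4.3]}; there are $O_k(1)$ relabellings. The $j$ pairs $x_{2i-1}=-x_{2i}$ contribute nothing to any odd power sum, so they may be chosen in $O(P^j)$ ways. The remaining $m=k+1-2j$ variables $z_1,\dots,z_m$ (namely $x_{2j+1},\dots,x_{k+1}$) must then satisfy
\[
\sum_{i=1}^{m} z_i^{2l-1}=h_l\quad (1\le l\le k),
\]
with no two summing to zero. It therefore suffices to show that the number of such $m$-tuples is $O(P^\varepsilon)$, uniformly in $\mathbf h$.

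Since $k\ge m-1+2j\ge m-1$, we know at least the first $m-1$ odd power sums $p_1,p_3,\dots,p_{2m-3}$ of $\mathbf z$, and in fact the first $k$. Take the polynomial $F(t)=\prod_{i=1}^m(t-z_i)$. The odd power sums fix the "odd part" of the symmetric data. More precisely, set $G(t)=F(t)F(-t)(-1)^m=\prod_i(z_i^2-t^2)$ up to sign. The power sums of the multiset $\{z_i\}\cup\{-z_i\}$ in odd degree vanish identically, so this does not help directly. Instead, use the generating function identity
\[
\frac{F(-t)}{F(t)}\ \text{(as a series in } 1/t)=\exp\Bigl(2\sum_{l\ge1}\frac{p_{2l-1}}{(2l-1)t^{2l-1}}\Bigr).
\]
This holds because $\log\bigl(F(-t)/F(t)\bigr)$, with the appropriate sign, is an odd series involving only the odd power sums. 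Thus knowing $p_1,\dots,p_{2k-1}$ determines the first $2k$ coefficients of the rational function $R(t)=(-1)^mF(-t)/F(t)$. Both $F$ and $F(-t)$ have degree $m$, and $2k\ge 2m-2$. The rational function is then determined up to a one-parameter ambiguity, by Padé uniqueness: two such approximants $A/B$ and $A'/B'$ agreeing to order $2k$ satisfy that $AB'-A'B$ has degree below $2m-2k$, so they agree exactly once $2k\ge 2m-1$. In our situation $k\ge m$ whenever $j\ge 1$, since then $m\le k-1$, so $F$ is determined exactly by $\mathbf h$ provided $F(t)$ and $F(-t)$ are coprime. Coprimality is precisely the no-antipodal-pair condition \eqref{4.3}, together with the requirement that $z_i\ne0$ repeatedly, and the latter needs care. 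With $F$ fixed, $\mathbf z$ is determined up to permutation, giving $O(1)$ solutions.

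The main obstacle is the case $j=0$, $m=k+1$, where we have one fewer equation than unknowns and the Padé count falls exactly one short. Here I would use a multiplicative identity in the spirit of Lemmas \ref{lemma2.1} and \ref{lemma3.1}. The degree-$(2m-2)$ data leaves a pencil $F+\lambda E$ of candidates, where $E$ is determined by $\mathbf h$, and the resultant condition reduces to the following statement: some explicit polynomial $\Psi(\mathbf h)$ equals a nonzero constant times $\prod_{i<l}(z_i+z_l)$. This is the analogue of \eqref{2.5}, because the product vanishes exactly when an antipodal pair exists. Under \eqref{4.3} this product is nonzero and is bounded by $P^{O(k)}$, so a divisor function estimate yields $O(P^\varepsilon)$ choices for the values $z_i+z_l$. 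These values determine each $z_i$, for instance via $z_1=\tfrac12\bigl((z_1+z_2)+(z_1+z_3)-(z_2+z_3)\bigr)$ when $m\ge3$, with $m\le2$ trivial. Verifying this identity is the crux. I would prove it as in Lemma \ref{lemma2.1}: specialise $z_l=-z_i$, which reduces the system to one with $k$ equations in $k-1$ free variables, and then argue that the compatibility polynomial vanishes identically, giving divisibility by each factor $z_i+z_l$. A degree comparison then pins down the constant, and a leading coefficient computation shows it is nonzero.
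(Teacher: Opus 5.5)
Your treatment of $j\ge 1$ works, but the case $j=0$ has a genuine gap, at the step you yourself call the crux. Specialising $z_l=-z_i$ only shows that
\[
\Psi(\tau_1(\mathbf z),\ldots,\tau_k(\mathbf z))=Q(\mathbf z)\prod_{i<l}(z_i+z_l)
\]
for some polynomial cofactor $Q$. To conclude by ``degree comparison'' that $Q$ is a nonzero constant, you need $\Psi$ to have weighted degree exactly $k(k+1)/2$, with weight $2l-1$ on $h_l$.

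The lower bound follows automatically from the divisibility. The upper bound is the whole content. A compatibility polynomial produced by elimination, or by the unspecified ``resultant condition'' on your pencil, has no a priori reason to have such small weight. Since you never write $\Psi$ down, its weight is never computed.

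This matters. If $Q$ were nonconstant, $\Psi(\mathbf h)=0$ would no longer force an antipodal pair, and type-$0$ solutions with $Q(\mathbf z)=0$ would escape the divisor argument entirely. The paper does not derive this fact either. It takes from Br\"udern--Robert (Lemma 1 and p.~227; see also Perron's Satz 1) both the existence of the relation $\Upsilon$ among the first $\kappa$ odd power sums of $\kappa-1$ variables and the fact that its weighted degree is $\kappa(\kappa+1)/2$. Only then does it run the specialisation argument you describe.

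You should either cite that result, or let your pencil do the work directly, which it can. Let $F$ come from a type-$0$ solution, and let $G$ span the direction of the affine family of monic degree-$m$ compatible polynomials (if the family is a single point, $F$ is determined outright).

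\begin{enumerate}
\item Since $k=m-1$, the polynomial $W(t)=F(-t)G(t)-G(-t)F(t)$ is odd of degree at most $1$. So $W=\gamma t$, where $\gamma$ is unchanged along the pencil and therefore depends only on $\mathbf h$.
\item Condition \eqref{4.3} allows at most one zero root, so $F=t^{\epsilon}H$ with $H(t)$ and $H(-t)$ coprime. This shows that $\gamma\ne0$ and that the family is at most a line.
\item Evaluating at a root $z_i\ne0$ gives $\gamma=2(-1)^mG(z_i)\prod_{l\ne i}(z_i+z_l)$.
\item Choose $G$ with integer coefficients of size $P^{O(1)}$, which Cramer's rule allows. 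The divisor bound then settles $j=0$ with no weighted-degree input.
\end{enumerate}

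For $j\ge1$ your route genuinely differs from the paper's. The paper discards the top $2j$ equations and reuses the $(\kappa+1)$-variable identity with $\kappa=k-2j$, or computes by hand when $2j\in\{k-1,k\}$. You instead use all $k$ equations and Pad\'e uniqueness, which gives $O(1)$ rather than $O(P^\varepsilon)$. The zero-root issue you flag there is harmless for the same reason as above: $F(-t)F'(t)=F'(-t)F(t)$ with $F=t^{\epsilon}H$ forces $H\mid F'$, and then $F'=F$.
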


From the upper bound supplied by this lemma, we infer via \eqref{4.4} that
\begin{equation}\label{4.6}
U_{k+1,k}(P;\mathbf h)\ll \sum_{0\le j\le I_k(\mathbf h)}V_{k+1,k}^{(j)}(P;\mathbf h)\ll P^{I_k(\mathbf h)+\varepsilon}.
\end{equation}

\begin{proof}[Proof of Lemma \ref{lemma4.1}] If $j=(k+1)/2$, which is possible only when $k$ is odd, then the 
relations \eqref{4.2} combine with \eqref{4.1} to show that $h_j=0$ $(1\le j\le k)$. We relabel variables so as to 
realise the relations \eqref{4.2}. Then, given fixed choices for the variables $x_{2i-1}$ $(1\le i\le \tfrac{1}{2}(k+1))$, 
the remaining variables are determined uniquely by the relations \eqref{4.2}. Thus, in this case in which $j=(k+1)/2$, 
we have $V_{k+1,k}^{((k+1)/2)}(P;\mathbf h)\ll P^{(k+1)/2}$, and the conclusion of the lemma follows.\par

Consider now the situation with $j<(k+1)/2$. Here, by relabelling variables, we find from \eqref{4.2} and \eqref{4.3} 
that
\begin{equation}\label{4.7}
V_{k+1,k}^{(j)}(P;\mathbf h)\ll P^jV_{k+1-2j,k}^{(0)}(P;\mathbf h).
\end{equation}

The situation in which $2j\in \{k-1,k\}$ is straightforward to handle. Here, we have $k+1-2j\in \{1,2\}$. The number 
of solutions of the equation
\[
x_1=h_1
\]
is plainly $1$, and so $V_{1,k}^{(0)}(P;\mathbf h)\le V_{1,1}^{(0)}(P;h_1)=1$. Meanwhile, the number of solutions of 
the system of equations
\[
\left. \begin{aligned}x_1^3+x_2^3&=h_2\\ x_1+x_2&=h_1\end{aligned}\right\}
\]
with $x_1+x_2\ne 0$ is also $O(1)$. In order to confirm this claim, we observe that since $h_1=x_1+x_2\ne 0$, we 
have
\[
x_1^2-x_1x_2+x_2^2=h_2/h_1\quad \text{and}\quad x_1+x_2=h_1,
\]
whence
\[
h_2/h_1=x_1^2-x_1(h_1-x_1)+(h_1-x_1)^2=3x_1^2-3x_1h_1+h_1^2.
\]
This quadratic equation has at most 2 solutions for $x_1$, each of which uniquely determines $x_2=h_1-x_1$. Thus, 
in either of these cases, we have $V_{k+1-2j,k}^{(0)}(P;\mathbf h)=O(1)$, and the estimate \eqref{4.7} then 
yields the bound $V_{k+1,k}^{(j)}(P;\mathbf h)\ll P^{j+\varepsilon}$. The conclusion of the lemma again follows, 
therefore, when $2j\in \{k-1,k\}$.\par

When instead $0\le 2j\le k-2$, we proceed in a more sophisticated manner.
By discarding information from the highest degree equations in \eqref{4.1}, we are led from the relation \eqref{4.7} to the 
estimate
\begin{equation}\label{4.8}
V_{k+1,k}^{(j)}(P;\mathbf h)\ll P^jV_{k+1-2j,k-2j}^{(0)}(P;\mathbf h')\quad (0\le j\le \tfrac{1}{2}(k-2)),
\end{equation}
where we write
\begin{equation}\label{4.9}
\mathbf h'=(h_1,\ldots ,h_{k-2j}).
\end{equation}
\par

Put $\kappa=k-2j$, and examine the expression $V_{\kappa+1,\kappa}^{(0)}(P;\mathbf h')$ that occurs in the upper 
bound \eqref{4.8}. Define the polynomials $t_j=t_j(\mathbf x)$ by putting
\[
t_j(\mathbf x)=\sum_{i=1}^{\kappa -1}x_i^{2j-1}\quad (1\le j\le \kappa).
\]
Then it follows from Br\"udern and Robert \cite[Lemma 1]{BR2012} that there exists a non-zero polynomial 
$\Upsilon (\mathbf z)\in \mathbb Z[z_1,\ldots ,z_\kappa]$ having the property that
\begin{equation}\label{4.10}
\Upsilon (t_1(\mathbf x),\ldots ,t_\kappa(\mathbf x))=0.
\end{equation}
As noted in \cite[p.~227]{BR2012}, this identity also appears under alternate guise in the work of Perron 
\cite[Satz 1]{Per1955}. Furthermore, the dicussion of \cite[p.~227]{BR2012} shows that the weighted degree of 
$\Upsilon (\mathbf z)$ is equal to $\kappa (\kappa+1)/2$, in the sense that in each monomial of 
$\Upsilon (\mathbf z)$ of the shape
\[
c_{\boldsymbol \alpha}z_1^{\alpha_1}\cdots z_\kappa^{\alpha_\kappa},
\]
with $c_{\boldsymbol \alpha}\in \mathbb Z$ and $\alpha_i\ge 0$ $(1\le i\le \kappa)$, one has
\begin{equation}\label{4.11}
\sum_{i=1}^\kappa (2i-1)\alpha_i=\kappa (\kappa+1)/2.
\end{equation}

\par Next, we define the polynomials
\[
\tau_j(\mathbf x)=\sum_{i=1}^{\kappa+1}x_i^{2j-1}\quad (1\le j\le \kappa).
\]
The equations
\begin{equation}\label{4.12}
\sum_{i=1}^{\kappa+1}x_i^{2j-1}=h_j\quad (1\le j\le \kappa)
\end{equation}
now become $\tau_j(\mathbf x)=h_j$ $(1\le j\le \kappa)$. In accordance with the discussion surrounding equation (5) of \cite{BR2012}, we 
find via \eqref{4.10} that when we make the specialisation $x_\kappa+x_{\kappa+1}=0$, one has
\[
\Upsilon (\tau_1(\mathbf x),\ldots ,\tau_\kappa(\mathbf x))=\Upsilon (t_1(\mathbf x),\ldots ,t_\kappa(\mathbf x))=0,
\]
and hence the polynomial $\Upsilon(\tau_1(\mathbf x),\ldots ,\tau_\kappa(\mathbf x))$ is divisible by 
$x_\kappa+x_{\kappa+1}$. Symmetrical arguments reveal that $x_i+x_j$ is likewise a factor whenever 
$1\le i<j\le \kappa+1$. In view of the relation \eqref{4.11}, we infer that there exists a non-zero integer 
$C=C(\kappa)$ having the property that
\[
\Upsilon(\tau_1(\mathbf x),\ldots ,\tau_\kappa(\mathbf x))=C\prod_{1\le i<j\le \kappa+1}(x_i+x_j).
\]
When the variables $x_1,\ldots ,x_{\kappa+1}$ satisfy \eqref{4.12}, it therefore follows that one has
\begin{equation}\label{4.13}
C\prod_{1\le i<j\le \kappa+1}(x_i+x_j)=\Upsilon(h_1,\ldots ,h_\kappa).
\end{equation}

\par Recall from \eqref{4.9} our definition of $\mathbf h'$, and suppose that $\Upsilon (\mathbf h')=0$. Then it 
follows from \eqref{4.13} that $x_i+x_j=0$ for some indices $i$ and $j$ with $1\le i<j\le \kappa+1$. This conclusion 
is in conflict with the definition of $V_{\kappa+1,\kappa}^{(0)}(P;\mathbf h')$, since any solutions of \eqref{4.12} 
counted by this quantity are of type $0$. This contradiction shows, in fact, that it is not possible that 
$\Upsilon(\mathbf h')=0$.\par

We may therefore continue our deliberations under the assumption that $\Upsilon(\mathbf h')\ne 0$. Then since $\Upsilon (\mathbf z)$ has weighted degree 
$\kappa (\kappa+1)/2$, we see that $|\Upsilon (\mathbf h')|\ll P^{\kappa (\kappa+1)/2}$, and hence we find from 
an elementary estimate for the divisor function that there are $O(P^\varepsilon)$ possible choices for integers 
$d_{ij}$ $(1\le i<j\le \kappa+1)$ having the property that
\[
x_i+x_j=d_{ij}\quad (1\le i<j\le \kappa+1).
\]
Fix any one such choice for these integers $d_{ij}$ $(1\le i<j\le \kappa+1)$. Since for $1\le i\le \kappa+1$, one has
\[
\sum_{\substack{1\le j\le \kappa+1\\ j\ne i}}(x_i+x_j)=(\kappa-1)x_i+\sum_{j=1}^{\kappa+1}x_j=(\kappa-1)x_i+h_1,
\]
we see that
\[
(\kappa-1)x_i=\sum_{\substack{1\le j\le \kappa+1\\ j\ne i}}d_{ij}-h_1.
\]
We are at liberty to suppose that $\kappa\ge 2$. Thus, for $1\le i\le \kappa+1$, the integer $x_i$ is fixed by 
$\mathbf h'$ and our choice for $\mathbf d$, and one infers that 
$V_{\kappa+1,\kappa}^{(0)}(P;\mathbf h')\ll P^\varepsilon$.\par

We have shown that, in all circumstances, one has
\[
V_{\kappa+1,\kappa}^{(0)}(P;\mathbf h')\ll P^\varepsilon.
\]
In combination with our earlier analysis of the situation with $2j\in \{k-1,k\}$, the conclusion of the 
lemma follows upon recalling \eqref{4.8}.
\end{proof}

With Lemma \ref{lemma4.1} in hand, we are now equipped to complete the proofs of Theorems~\ref{theorem1.5} 
and \ref{theorem1.6}.

\begin{proof}[Proof of Theorem \ref{theorem1.5}]
Suppose that $\eta\in (0,1)$ and $r$ is a non-negative integer, and suppose further that 
$\mathbf h \in \mathbb Z^k$ is a coefficient tuple with the property that $U_{k+1,k}(P;\mathbf h)>P^{r+\eta}$. 
Then we see from \eqref{4.6} that
\[
P^{r+\eta}<U_{k+1,k}(P;\mathbf h)\ll P^{I_k(\mathbf h)+\varepsilon},
\]
and thus $I_k(\mathbf h)\ge r+1$. This lower bound ensures that some tuple $\mathbf x$ counted by 
$U_{k+1,k}(P;\mathbf h)$ has type at least $r+1$. Then there is a relabelling of variables with the property that
\[
x_{2i-1}+x_{2i}=0\quad (1\le i\le r+1).
\]
This is, of course, impossible when $2r+2>k+1$. By reference to \eqref{4.1}, we find that there are integers $a_i$ 
with $|a_i|\le P$ $(1\le i\le k-2r-1)$ having the property that
\[
h_j=\sum_{i=1}^{k-2r-1}a_i^{2j-1}\quad (1\le j\le k).
\]
Since $I_k(\mathbf h)\ge r+1$, moreover, it follows from \eqref{4.5} that $U_{k+1,k}(P;\mathbf h)\gg P^{r+1}$. 
Choosing $r$ to be maximal with the property that $U_{k+1,k}(P;\mathbf h)>P^{r+\eta}$ for some $\eta>0$, we 
infer that
\[
P^{r+1}\ll U_{k+1,k}(P;\mathbf h)\le P^{r+1+\eta}.
\]
Since $\eta$ may be chosen arbitrarily small, we conclude that 
$P^{r+1}\ll U_{k+1,k}(P;\mathbf h)\ll P^{r+1+\varepsilon}$. We have now confirmed all of the conclusions of 
Theorem \ref{theorem1.5}.
\end{proof}

We turn next to the problem of showing that structure in the coefficient tuple $\mathbf h$ implies precise 
paucity estimates.

\begin{proof}[Proof of Theorem \ref{theorem1.6}]
Define $r=r_0(\mathbf h)$ as in the statement of Theorem \ref{theorem1.6}, and suppose that the integers 
$a_1,\ldots ,a_r$ satisfy $|a_i|\le P$ $(1\le i\le r)$ and the equations \eqref{1.8}. Recall the definition of 
$\tau=\tau(k;\mathbf h)$ from the statement of the theorem. Then, in light of the estimates \eqref{4.5} and 
\eqref{4.6}, the proof of the theorem will follow once we confirm that $I_k(\mathbf h)=\tau(k;\mathbf h)$. Note next 
that, in view of \eqref{1.8}, any $(k+1)$-tuple $\mathbf x\in \mathbb Z^{k+1}$ of the shape 
$\mathbf x=(x_1,\ldots ,x_{k+1-r},a_1,\ldots ,a_r)$, and satisfying the equations
\[
\sum_{i=1}^{k+1-r}x_i^{2j-1}=0 \quad (1 \le j \le k),
\]
is a solution of \eqref{4.1}. It is apparent that any choice of $x_1, \ldots, x_{k+1-r}$, with $x_{2i-1}+x_{2i}=0$ for 
$1\le i\le \tau$, complemented with $x_{2\tau +1}=0$ in the case when $k+1-r$ is odd, provides such a solution. 
Thus $I_k(\mathbf h)\ge \tau$. It is also apparent that no solution of type exceeding $\tau$ can exist, since this 
would imply the existence of a representation of $\mathbf h$ as in \eqref{1.8} but with 
$r=k+1-2(\tau+1)<r_0(\mathbf h)$, contradicting the implicit minimality of $r_0(\mathbf h)$. Thus, we have 
$I_k(\mathbf h)=\tau(k;\mathbf h)$ as claimed, and the conclusion of the theorem follows. 
\end{proof}

Finally, we turn to the proof of Corollary \ref{corollary1.7}.

\begin{proof}[Proof of Corollary \ref{corollary1.7}]
Suppose that $\mathbf h\in \mathbb Z^k\setminus \{ \mathbf 0\}$ and $1\le t\le k$. We observe first that it suffices 
to establish the conclusion of the corollary in the cases with $\lceil (k+1)/2\rceil\le t\le k$ in order to establish it in 
full generality for $1\le t\le k$. In order to confirm this observation, suppose that $t$ is an integer with 
$1\le t<\lceil (k+1)/2\rceil$. By considering solutions of the system \eqref{1.5} in the case $s=2t+2$ with 
$x_{2t+1}=-x_{2t+2}$, we see that
\begin{equation}\label{4.14}
PU_{2t,k}(P;\mathbf h)\ll U_{2t+2,k}(P;\mathbf h).
\end{equation}
By iterating this relation, one finds that whenever it is known that
\begin{equation}\label{4.15}
U_{2T,k}(P;\mathbf h)\ll   P^{T-1+\varepsilon}\quad (\mathbf h\in \mathbb Z^k\setminus \{0\}),
\end{equation}
with some integer $T$ satisfying $T\ge \lceil (k+1)/2\rceil$, then it follows that
\[
U_{2t,k}(P;\mathbf h)\ll P^{t-T}\cdot P^{T-1+\varepsilon} \ll P^{t-1+\varepsilon}\quad (1\le t\le T).
\]
This confirms our claim.\par

We next set about proving the upper bound \eqref{4.15} for $\lceil (k+1)/2\rceil \le T\le k$. We observe that
\begin{equation}\label{4.16}
U_{2T,k}(P;\mathbf h)=\sum_{|x_{k+2}|\le P}\cdots \sum_{|x_{2T}|\le P}U_{k+1,k}(P;\mathbf h'),
\end{equation}
where we write
\[
h_j'=h_j-(x_{k+2}^{2j-1}+\ldots +x_{2T}^{2j-1})\quad (1\le j\le k).
\]
Let $\eta>0$ be arbitrarily small, and suppose that $\mathbf h'\in \mathbb Z^k$ is a coefficient $k$-tuple with the 
property that for some non-negative integer $r$, one has
\begin{equation}\label{4.17}
U_{k+1,k}(P;\mathbf h')>P^{r+\eta}.
\end{equation}
We take $r=r(\mathbf h')$ to be maximal with this property, and note that from Theorem \ref{theorem1.5}, one 
then has $0\le r\le (k-1)/2$. The remaining tuples $\mathbf h'$ in which \eqref{4.17} does not hold for any such 
value of $r$ must have the property that
\begin{equation}\label{4.18}
U_{k+1,k}(P;\mathbf h')\ll P^\varepsilon.
\end{equation}
Note that when \eqref{4.17} holds with $0\le r\le (k-1)/2$, it also follows from Theorem \ref{theorem1.5} that there 
exist integers $a_i$ with $|a_i|\le P$ $(1\le i\le k-1-2r)$ with
\begin{equation}\label{4.19}
h_j-(x_{k+2}^{2j-1}+\ldots +x_{2T}^{2j-1})=a_1^{2j-1}+\ldots +a_{k-1-2r}^{2j-1}\quad (1\le j\le k).
\end{equation}
In these circumstances, Theorem \ref{theorem1.5} shows that
\[
U_{k+1,k}(P;\mathbf h')\ll P^{r+1+\varepsilon}.
\]
The number of possible choices for $\mathbf h'$ of this type is determined by \eqref{4.19}, and so this number is 
bounded above by the number of integral solutions of the system
\[
\sum_{i=1}^{2T-2r-2}y_i^{2j-1}=h_j\quad (1\le j\le k),
\]
with $|y_i|\le P$ $(1\le i\le 2T-2r-2)$. We thus conclude from \eqref{4.16} and \eqref{4.18} that
\[
U_{2T,k}(P;\mathbf h)\ll P^{2T-k-1+\varepsilon}+\sum_{0\le r\le (k-1)/2}P^{r+1+\varepsilon}
U_{2T-2r-2,k}(P;\mathbf h).
\]
By employing \eqref{4.14}, we see that for $0\le r\le (k-1)/2$, one has
\[
P^{r+1+\varepsilon}U_{2T-2r-2,k}(P;\mathbf h)\ll P^{1+\varepsilon}U_{2T-2,k}(P;\mathbf h).
\]
Thus, we may conclude at this stage that when $\lceil (k+1)/2\rceil \le T\le k$, one has
\begin{equation}\label{4.20}
U_{2T,k}(P;\mathbf h)\ll P^{T-1+\varepsilon}+P^{1+\varepsilon}U_{2T-2,k}(P;\mathbf h).
\end{equation}

\par We now divide into cases according to the parity of $k$. When $k$ is odd, we may iterate the application of 
\eqref{4.20} to infer that
\begin{equation}\label{4.21}
U_{2T,k}(P;\mathbf h)\ll P^{T-1+\varepsilon}+P^{T-(k+1)/2}U_{k+1,k}(P;\mathbf h).
\end{equation}
We now see from Theorem \ref{theorem1.5} that
\[
U_{k+1,k}(P;\mathbf h)\ll P^{(k-1)/2+\varepsilon}
\]
unless $h_j=0$ $(1\le j\le k)$. Since the latter scenario is specifically excluded by the hypotheses of the corollary, it 
follows from \eqref{4.21} that
\[
U_{2T,k}(P;\mathbf h)\ll P^{T-1+\varepsilon}+P^{T-(k+1)/2}\cdot P^{(k-1)/2+\varepsilon}\ll P^{T-1+\varepsilon}.
\]
This confirms \eqref{4.15} in the scenario that $k$ is odd.\par

Consider next the situation in which $k$ is even. Here, we deduce from \eqref{4.20} that
\begin{equation}\label{4.22}
U_{2T,k}(P;\mathbf h)\ll P^{T-1+\varepsilon}+P^{T-(k+2)/2}U_{k+2,k}(P;\mathbf h).
\end{equation}
In this situation, we make use of the bound
\[
U_{k+2,k}(P;\mathbf h)=\sum_{|x|\le P}U_{k+1,k}(P;\mathbf h'),
\]
where we now put
\begin{equation}\label{4.23}
h_j'=h_j-x^{2j-1}\quad (1\le j\le k).
\end{equation}
In this instance, we see from Theorem \ref{theorem1.5} that
\[
U_{k+1,k}(P;\mathbf h')\ll P^{(k-2)/2+\varepsilon},
\]
unless there exists an integer $a$ with $|a|\le P$ satisfying the system of equations
\begin{equation}\label{4.24}
h_j'=a^{2j-1}\quad (1\le j\le k),
\end{equation}
in which case we have
\[
U_{k+1,k}(P;\mathbf h')\ll P^{k/2+\varepsilon}.
\]
On recalling that $k\ge 2$, we see from the equations \eqref{4.23} and \eqref{4.24} that $h_1=x+a$. As in previous arguments we observe that if $h_1=0$, we have 
$h_j=0$ $(1\le j\le k)$, contradicting our assumption that $\mathbf h\in \mathbb Z^k\setminus \{\mathbf 0\}$. 
Thus $h_1\ne 0$, and we have
\[
h_3/h_1=x^2-ax+a^2=(x+a)^2-3a(a+x)+3a^2=h_1^2-3ah_1+3a^2.
\]
This determines $a$ up to a possible choice of roots of a quadratic equation, and hence also $x$ via the equation 
$h_1=x+a$. Consequently, in instances in which $U_{k+1,k}(P;\mathbf h')>P^{(k-2)/2+\eta}$ for some $\eta>0$, 
there are $O(1)$ choices for the variables $a$ and $x$. Then
\[
U_{k+2,k}(P;\mathbf h)=\sum_{|x|\le P}U_{k+1,k}(P;\mathbf h')\ll P\cdot P^{(k-2)/2+\varepsilon}+
1\cdot P^{k/2+\varepsilon}\ll P^{k/2+\varepsilon}.
\]
On substituting this estimate into \eqref{4.22}, we conclude that
\[
U_{2T,k}(P;\mathbf h)\ll P^{T-1+\varepsilon}+P^{T-(k+2)/2+\varepsilon}\cdot P^{k/2+\varepsilon}\ll 
P^{T-1+2\varepsilon}.
\]
This confirms \eqref{4.15} in the scenario that $k$ is even.\par

In view of the preliminary discussion opening this argument, the proof of the corollary is now complete.
\end{proof}

\bibliographystyle{amsbracket}
\providecommand{\bysame}{\leavevmode\hbox to3em{\hrulefill}\thinspace}

\end{document}